\title{The Cassels heights of cyclotomic integers}
\author{James McKee}
\address{Department of Mathematics, Royal Holloway, University of London, Egham Hill,  Egham, Surrey, TW20 0EX, U.K.}
\email{james.mckee@rhul.ac.uk}
\author{Byeong-Kweon Oh}
\address{Department of Mathematical Sciences and
Research Institute of Mathematics, Seoul National University Seoul 08826, Republic of Korea} 
\email{bkoh@snu.ac.kr}
\author{Chris Smyth}
\address{School of Mathematics\\
University of Edinburgh\\
Edinburgh EH9 3FD\\
Scotland, U.K.}
\email{c.smyth@ed.ac.uk}
\subjclass[2010]{11D85, 11R18}
\keywords{cyclotomic integers, Cassels height, universal quadratic polynomials}
\begin{document}

\maketitle

\bibliographystyle{plain}
\newcommand{\fl}[1]{\left\lfloor #1\right\rfloor}
\newcommand{\qr}[2]{\left(\frac{#1}{#2}\right)}
\newcommand{\ep}[2]{\equiv{#1}\pmod{#2}}
\def\T{\textup{\textsf{T}}}

\newtheorem{theorem}{Theorem}
\newtheorem{acknowledgement}[theorem]{Acknowledgement}
\newtheorem{cor}[theorem]{Corollary}
\newtheorem{lemma}[theorem]{Lemma}
\newtheorem{prop}[theorem]{Proposition}
\newtheorem{algorithm}[theorem]{Algorithm}
\newtheorem{case}[theorem]{Case}
\newtheorem{claim}[theorem]{Claim}
\newtheorem{conclusion}[theorem]{Conclusion}
\newtheorem{condition}[theorem]{Condition}
\newtheorem{criterion}[theorem]{Criterion}
\newtheorem{conj}[theorem]{Conjecture}
\newtheorem{notation}[theorem]{Notation}
\newtheorem{solution}[theorem]{Solution}
\newtheorem{summary}[theorem]{Summary}

\theoremstyle{definition}
\newtheorem{definition}[theorem]{Definition}
\newtheorem*{rmk}{Remark}

\newtheorem{ex}[theorem]{Exercise}
\newtheorem{problem}[theorem]{Problem}
\newtheorem{oproblem}[theorem]{Open Problem}
\newtheorem{eex}[theorem]{Easy exercise}
\theoremstyle{remark}

\newtheoremstyle{dotless}{}{}{\itshape}{}{\bfseries}{}{ }{}
\theoremstyle{dotless}
\newtheorem*{Rprob}{Robinson's Problem}
\newtheorem*{Rconj}{Robinson's Conjecture}

\makeatletter
\@namedef{subjclassname@2010}{
  \textup{2010} Mathematics Subject Classification}
\makeatother

\def\H{\mathcal H}
\def\la{\lambda}
\def\l{\ell}
\def\Z{\mathbb Z}
\def\N{\mathbb N}
\def\R{\mathbb R}
\def\bee{\begin{enumerate}}\def\ene{\end{enumerate}}
\def\bei{\begin{itemize}}\def\eni{\end{itemize}}
\def\beq{\begin{equation}}\def\enq{\end{equation}}
\def\bepr{\begin{proof}}\def\enpr{\end{proof}}
\def\bet{\begin{theorem}}\def\ent{\end{theorem}}
\def\bel{\begin{lemma}}\def\enl{\end{lemma}}

\def\i{\item}
\def\bysame{---}

\def\C{\mathbb C}\def\D{\mathbb D}\def\F{\mathbb F}
\def\G{\mathbb G}
\def\Gmn{{\mathbb G}_m^n}
\def\N{\mathbb N}
\def\R{\mathbb R}\def\RR{\mathcal R}
\def\Q{\mathbb Q}\def\T{\mathcal T}\def\Tr{\textup{\textsf{T}}}
\def\Z{\mathbb Z}
\def\L{\mathcal L}
\def\al{\alpha}
\def\be{\beta}
\def\ga{\gamma}
\def\u{\mathbf u}\def\uu{\underline{u}}\def\ua{\underline{a}}
\def\eps{\varepsilon}
\def\de{\delta}
\def\ro{\rho}
\def\phi{\varphi}
\def\om{\omega}
\def\si{\sigma}
\def\th{\theta}
\def\bpsi{\boldsymbol\psi}
\def\cc {,\dots,}
\def\bi{\mathbf i}
\def\bom{\boldsymbol \omega}
\def\bla{\boldsymbol \lambda}
\def\bt{\mathbf t}
\def\br{\mathbf r}
\def\bx{\mathbf x}
\def\bo{\mathbf 0}
\def\ba{\mathbf a}
\def\D{\mathcal D}
\def\r{\mathbf r}\def\y{\mathbf y}\def\z{\mathbf z}\def\0{\mathbf 0}\def\s{\mathbf s}
\def\bc{\mathbf c}
\def\x{\mathbf x}\def\j{\mathbf j}\def\w{\mathbf w}
\def\Re{\operatorname{Re}}\def\Im{\operatorname{Im}}
\def\GL{\operatorname{GL}}
\def\Gal{\operatorname{Gal}}
\def\tr{\overline{\operatorname{tr}}}
\def\trace{\operatorname{trace}}
\def\sgn{\operatorname{sgn}}
\def\ind{\operatorname{ind}}
\def\lcm{\operatorname{lcm}}
\def\res{\operatorname{res}}
\def\exp{\operatorname{exp}}
\def\diag{\operatorname{diag}}
\def\deg{\operatorname{deg}}
\def\max{\operatorname{max}}
\def\dim{\operatorname{dim}}
\def\BM{\operatorname{BM}}
\def\Norm{\operatorname{Norm}}

\def\l{\ell}
\def\M{\mathscr M}
\def\CC{\mathscr C}
\def\NC{\mathscr N}
\def\MC{\mathscr M}
\def\t{\theta}
\def\threesidedbox#1{\setbox0=\hbox{$#1$}\dimen0=\wd0 \advance\dimen0 by3pt\rlap{\hbox{\vrule height8pt width.4pt depth2pt \kern-.4pt\vrule height8.4pt width\dimen0 depth-8pt\kern-.4pt \vrule height8pt width.4pt depth2pt}}\relax \hbox to\dimen0{\hss$#1$\hss}}
\def\oldho#1{\threesidedbox#1}

\makeatletter

\newcommand*\strictceil[1]{%
    \mathord{\mathpalette\@strictceil{#1}}%
}
\newcommand*\@strictceil[2]{%
    \vbox{\m@th
        \dimen@ \fontdimen 8
            \ifx\scriptscriptstyle #1%
                \scriptscriptfont
            \else\ifx\scriptstyle #1%
                \scriptfont
            \else
                \textfont
            \fi\fi \thr@@
        \kern \dimen@ 
        \hbox{%
            \vrule \@width\dimen@
            \vbox{%
                \kern -\dimen@
                \hbox{$#1\overline{%
                    \kern \thr@@\dimen@
                    \begingroup
                        #2
                    \endgroup
                    \kern \thr@@\dimen@
                }$}%
            }%
            \vrule \@width\dimen@
        }%
    }%
}

\makeatother
\def\ho#1{\strictceil#1}

\begin{abstract} We study the set $\CC$ of mean square values of the moduli of the conjugates of cyclotomic integers $\be$. For its $k$th derived set $\CC^{(k)}$, we show that  $\CC^{(k)}=(k+1)\CC\,\, (k\ge 0)$, so that also  $\CC^{(k)}+\CC^{(\l)}=\CC^{(k+\l+1)}\,\,(k,\l\ge 0)$. We also calculate the order type of $\CC$, and show that it is the same as that of the set of PV numbers.

Furthermore, we describe precisely the restricted set $\CC_p$ where the $\be$ are confined to the ring $\Z[\om_p]$, where $p$ is an odd prime and $\om_p$ is a primitive $p$th root of unity.
In order to do this, we prove that both of the quadratic polynomials $a^2+ab+b^2+c^2+a+b+c$
and $a^2+b^2+c^2+ab+bc+ca+a+b+c$ are universal.
\end{abstract}
\section{Introduction}\label{S-defs}

A {\it cyclotomic integer} is an algebraic integer $\be$ that can be written as a sum of roots of unity. 
Any such $\beta$ lies in $\mathbb{Z}[\om_n]$ for some $n$, where $\om_n$ is a primitive $n$th root of unity, and it is well known that $\Z[\om_n]$ is the ring of integers of the field $\Q(\om_n)$. 
If $\be_1=\be,\be_2\cc \be_n$ are the Galois conjugates of $\be$ (or indeed a list that includes each Galois conjugate the same number of times), we define, following Cassels \cite{Cassels1969}, $\M(\be)$ by
\[
\M(\be)=\frac1{n}\sum_{j=1}^n|\be_j|^2.
\]
Let us call this value the {\it Cassels height of $\be$}. Because, as first noted by Robinson \cite{Robinson1965}, the $|\be_j|^2$ are the 
conjugates of $|\be|^2$ (something that is not true for algebraic integers generally), $\M(\be)$ is rational. From the AM-GM inequality it follows immediately that $\M(\be)\ge 1$ for $\be\ne 0$.
Two nonzero cyclotomic integers are said to be {\it equivalent} if dividing the first by some conjugate of the second gives a root of unity.
Equivalent cyclotomic integers have the same Cassels height.

The aim of this paper is to study the set
\[
\CC=\{ \M(\be)\mid \be \text{ a nonzero cyclotomic integer}\}.
\]
This set has an interesting structure.
 In 2009 Stan and Zaharescu \cite[Theorem 4]{StanZaharescu2009}  proved the following results concerning $\CC$:
\bei
\item[(i)]{\bf Closure.} The set $\CC$ is a closed subset of $\Q$.  (See also \cite[Theorem 9.1.1]{CalegariMorrisonSnyder2011}).
\item[(ii)] {\bf Additivity.} The set $\CC$ is closed under addition. (This also follows from (i) and  Proposition \ref{P-2lim} below.)
\item[(iii)]  For every rational number $r\in[0,1)$ there is an integer $n_0$ such that $r+n\in\CC$ for all $n\ge n_0$.
\eni
They applied their results to deducing facts about character values of finite groups.
We extend (i) and (ii) to obtain the following results, connecting the $k$th derived set $\CC^{(k)}$ of $\CC$ (the derived set of $\CC^{(k-1)}$, with $\CC^{(0)}=\CC)$ and the Minkowski sumset 
\beq\label{E-sumset}
k\CC=\{c_1+c_2+\cdots+c_k\mid c_1,c_2\cc c_k\in\CC\}.
\enq

\begin{theorem} \label{T-1} For $k\ge 1$ the $k$th derived set $\CC^{(k)}$ of $\CC$ is equal to the sumset $(k+1)\CC$.
Furthermore every element of $\CC^{(k)}$ is a limit from both sides of elements of $\CC^{(k-1)}$.
\end{theorem}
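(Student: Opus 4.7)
The plan is to induct on $k$, proving the two inclusions in $\CC^{(k)}=(k+1)\CC$ separately and folding the two-sided-limit refinement into the forward inclusion.

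For the forward inclusion $(k+1)\CC\subseteq\CC^{(k)}$, with each element a two-sided limit of $\CC^{(k-1)}$: the base case $k=1$ is Proposition~\ref{P-2lim}, which I take to assert that every $a+b$ with $a,b\in\CC$ is a two-sided limit of elements of $\CC$. For the inductive step, write $c\in(k+2)\CC$ as $c=c_0+c'$ with $c_0\in\CC$ and $c'\in(k+1)\CC=\CC^{(k)}$ (using the inductive equality). The inductive two-sided property supplies a sequence $c'_n\in k\CC=\CC^{(k-1)}$ with $c'_n\to c'$ from both sides; translating by $c_0$ yields $c_0+c'_n\in(k+1)\CC=\CC^{(k)}$ tending two-sidedly to $c$, so $c\in\CC^{(k+1)}$ with the required property.

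For the reverse inclusion $\CC^{(k)}\subseteq(k+1)\CC$, I would again induct on $k$, with the inductive step reducing to the base case via an extraction argument. Given $c\in\CC^{(k+1)}$, choose $d_n\to c$ with $d_n\in\CC^{(k)}=(k+1)\CC$ by the inductive equality and $d_n\ne c$, and write $d_n=\sum_{i=0}^{k}e_n^{(i)}$ with $e_n^{(i)}\in\CC$. Since $d_n$ is bounded and $\M(\be)\ge 1$ for every $\be$, each coordinate sequence is bounded; a diagonal extraction lets me assume $e_n^{(i)}\to e^{(i)}\in\CC$ for every $i$ (rationality of the limits, and hence membership in $\CC$ by closure of $\CC$, is guaranteed by the base case once proved). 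Because $d_n$ is not eventually constant, at least one coordinate sequence $e_n^{(i_0)}$ is not eventually constant, so its limit $e^{(i_0)}$ is a genuine limit point of $\CC$; applying the base case yields $e^{(i_0)}=f+g$ with $f,g\in\CC$, and consequently $c=f+g+\sum_{i\ne i_0}e^{(i)}\in(k+2)\CC$.

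The principal obstacle is thus the base case of the reverse direction: every limit point of $\CC$ lies in $\CC+\CC$. Starting from cyclotomic integers $\be_n$ realising a convergent but nonconstant sequence $\M(\be_n)\to c$, I would aim to produce, after passing to a subsequence, a decomposition $\be_n=\ga+\de_n$ in which $\ga$ is a fixed cyclotomic integer and $\de_n$ is ``asymptotically orthogonal'' to $\ga$, so that $\M(\be_n)=\M(\ga)+\M(\de_n)+2\langle\ga,\de_n\rangle$ with cross term tending to $0$ along the subsequence. Identifying $\lim_n\M(\de_n)$ as $\M(\de_\infty)$ for some cyclotomic integer $\de_\infty$ then yields $c=\M(\ga)+\M(\de_\infty)\in\CC+\CC$. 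Producing the decomposition and controlling the cross terms, by exploiting the vanishing of traces of nontrivial products of roots of unity supported on disjoint conductors, is where I expect the technical weight of the theorem to reside.
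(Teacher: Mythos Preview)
Your overall architecture matches the paper's: both argue by induction, with Proposition~\ref{P-2lim} handling the forward inclusion at the base and the serious work concentrated in the reverse inclusion $\CC^{(1)}\subseteq 2\CC$. Your reduction of the inductive step of the reverse inclusion to the base case is in fact tidier than the paper's presentation, which re-runs the full structural analysis on each coordinate $\gamma_{in}$ at every level rather than simply invoking $\CC^{(1)}\subseteq 2\CC$ on the one coordinate that genuinely moves.

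The one substantive omission is in your sketch of the base case. To produce the splitting $\be_n=\ga+\de_n$ by a subsequence argument you need a \emph{uniform} bound on the number of roots of unity appearing in a minimal representation of $\be_n$; without this there is no finite combinatorial object on which to run the extraction. The paper obtains this from Loxton's theorem (Theorem~\ref{T-Loxg}): $\M(\be)\ge g(\NC(\be))$ with $g$ strictly increasing, so a bound on $\M(\be_n)$ forces a bound on $\NC(\be_n)$. With that in hand the paper normalises each $\be_n$ to begin with the summand $1$, takes minimal representations (so that no partial block can vanish, guaranteeing $\ga\ne 0$), and extracts subsequences layer by layer to reach the form of Proposition~\ref{P-bygum}; this gives exactly your ``asymptotic orthogonality'' and shows the limit of $\M(\de_n)$ lies in $\CC$ (indeed in a sumset of $\CC$). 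Your instinct about where the weight lies is right, but Loxton's inequality is the specific lever that makes the decomposition go through.
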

The following is an immediate consequence.
\begin{cor}\label{C-1} The smallest element of $\CC^{(k)}\,\,(k\ge 0)$ is $k+1$. Furthermore,
a stronger version of additivity holds, namely that  $\CC^{(k)}+\CC^{(\l)}=\CC^{(k+\l+1)}\,\,(k,\l\ge 0)$.
\end{cor}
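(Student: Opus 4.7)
The plan is to derive both assertions of Corollary~\ref{C-1} directly from the identity $\CC^{(k)}=(k+1)\CC$ established in Theorem~\ref{T-1}; there is essentially no obstacle beyond bookkeeping, since the corollary is a formal consequence once the theorem is in hand.

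For the first statement, I would first identify the smallest element of $\CC$ itself. The AM--GM bound recalled in the introduction gives $\M(\be)\ge 1$ for every nonzero cyclotomic integer $\be$, while the choice $\be=1$ gives $\M(1)=1\in\CC$. Hence $\min\CC=1$. Now by Theorem~\ref{T-1} we have
\[
\CC^{(k)}=(k+1)\CC=\{c_1+c_2+\cdots+c_{k+1}\mid c_i\in\CC\},
\]
and the minimum of the right-hand side is $(k+1)\cdot\min\CC=k+1$, attained by taking $c_1=\cdots=c_{k+1}=1$, which corresponds for instance to $\be_1=\cdots=\be_{k+1}=1$. This gives $\min\CC^{(k)}=k+1$.

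For the strengthened additivity, I would simply chain two applications of Theorem~\ref{T-1} with the trivial Minkowski-sum identity $(k+1)\CC+(\ell+1)\CC=(k+\ell+2)\CC$. Writing the argument out,
\[
\CC^{(k)}+\CC^{(\l)}=(k+1)\CC+(\l+1)\CC=(k+\l+2)\CC=\CC^{(k+\l+1)},
\]
where the first and third equalities use Theorem~\ref{T-1} (for $k$, $\l$, and $k+\l+1$ respectively) and the middle equality is immediate from the definition \eqref{E-sumset} of the sumset $m\CC$. This yields the claimed equality for all $k,\l\ge 0$, and completes the corollary.
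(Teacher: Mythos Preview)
Your proposal is correct and is exactly the argument the paper has in mind: Corollary~\ref{C-1} is stated without proof as an ``immediate consequence'' of Theorem~\ref{T-1}, and your derivation---$\min\CC=1$ from AM--GM plus $\M(1)=1$, then $\min\CC^{(k)}=(k+1)\cdot 1$, and the chain $(k+1)\CC+(\l+1)\CC=(k+\l+2)\CC$ for the additivity---is precisely that immediate consequence spelled out.
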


Sets having similar topological (though not algebraic) structure as $\CC$ have been found before. Salem \cite{Salem1944} proved that the set $S$ of all Pisot-Vijayaraghavan (PV) numbers is closed in $\R$.
The sets $S^{(k)}$ are known to be nonempty, with the smallest element being at least $\sqrt{k}$ -- see \cite{Boyd1979}. Also, Boyd and Mauldin in 1996  \cite{BoydMauldin1996} proved that for $k\ge 1$ every member of $S^{(k)}$ is a limit from both sides of elements of $S^{(k-1)}$. This enabled them to specify the order type of $S$. With this in mind, and recalling that Axel Thue \cite{Thue1912} was the discoverer of the PV numbers, we define a {\it Thue set} $T$ to be a subset of the positive real line with the following properties:
\bei\item[(i)] The set $T$ is a closed subset of $\R_+$;
\item[(ii)] For $k\ge 1$  the $k$th derived set $T^{(k)}$ is nonempty, and every element of it is a limit from both sides of elements of $T^{(k-1)}$;
\item[(iii)]  $t_k:=\min\{t\mid t\in T^{(k)}\} \to\infty$ as $k\to\infty$.
\eni
So $S$ is a Thue set.
\begin{cor}\label{C-2} The set $\CC$ is a Thue set.
\end{cor}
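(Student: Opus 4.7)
The plan is to verify each of the three defining conditions of a Thue set for $\CC$, all of which follow essentially for free from what has already been established.

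For condition (i), the goal is closure of $\CC$ in $\R_+$. Stan and Zaharescu's result as cited gives closure of $\CC$ as a subset of $\Q$; to upgrade this to closure in $\R$ I would invoke Theorem \ref{T-1} and observe that every accumulation point of $\CC$ in $\R$ must lie in $\CC^{(1)}=2\CC$, which is contained in $\Q$. Hence $\CC$ has no irrational accumulation points in $\R$, and combined with the closure statement in $\Q$ this yields closure in $\R$. Since $\CC\subset [1,\infty)$ by the AM--GM observation recalled in the introduction, closure in $\R_+$ coincides with closure in $\R$.

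For condition (ii), Theorem \ref{T-1} delivers both clauses directly: for $k\ge 1$, the derived set $\CC^{(k)}=(k+1)\CC$ is non-empty (for instance, it contains $k+1$), and every one of its elements is a two-sided limit of elements of $\CC^{(k-1)}$. Condition (iii) is then immediate from Corollary \ref{C-1}, which identifies $t_k=\min \CC^{(k)}=k+1$, and this plainly tends to infinity with $k$.

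The only step with any substance is the closure upgrade in (i), and even there the work is done by Theorem \ref{T-1}; conditions (ii) and (iii) are direct quotations of results already in hand. I therefore do not anticipate any real obstacle beyond writing the chain of citations cleanly.
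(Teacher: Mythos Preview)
Your verification of conditions (ii) and (iii) is exactly what the paper does: quote Theorem \ref{T-1} for the two-sided limit statement and non-emptiness, and quote Corollary \ref{C-1} for $t_k=k+1\to\infty$.

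The only point of divergence is your treatment of (i). You read ``closed subset of $\Q$'' as closure in the subspace topology on $\Q$ and then propose to upgrade to closure in $\R$ via Theorem \ref{T-1}. This is both unnecessary and, as written, circular. The Stan--Zaharescu result (and the Calegari--Morrison--Snyder result cited alongside it) establish that $\CC$ is closed in $\R$; the phrase in the introduction simply records that this closed set happens to lie in $\Q$. The paper accordingly treats closure in $\R$ as already known and invokes it freely inside the proof of Theorem \ref{T-1} itself (``Because the set $\CC$ is closed, the limit will be $\M(\ga_{i\infty})$\ldots''). So appealing to Theorem \ref{T-1} to deduce closure in $\R$ would be using a result whose proof already consumed that fact. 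Drop the upgrade step and simply cite closure in $\R$ as given; then your argument coincides with the paper's.
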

It is immediately clear that all derived sets of a Thue set are again Thue sets.
Thus all the derived sets $\CC^{(k)}$ for $k\ge 1$ are also Thue sets.


It may be that the set of all Mahler measures of polynomials in any number of variables and having integer coefficients also forms a Thue set. Boyd \cite{Boyd1981b} conjectured that this set is closed. There is some further evidence for the set being a Thue set  in \cite{Smyth2018}.

Our second main result concerns the set of those $\M(\be)$ where, for a given odd prime $p$,  $\be$ is a sum of $2p$th roots of unity. We denote this set by $\CC_p$, so that 
\[
\CC_p=\{\M(\be)\mid \be\in\Z[\om_p]\},
\]
where $\om_p$ is a primitive $p$th root of unity.

\begin{theorem} \label{T-2}  For all primes $p\ge 5$ the set $\CC_p$ is given by
\beq\label{E-poss-form}
\CC_p=\left\{\frac1{p'}\left(\tfrac12 s(p-s)+r p\right)\mid s=0,1\cc p' \text{ and } r\ge 0\right\}.
\enq
Here $p':=(p-1)/2$.
\end{theorem}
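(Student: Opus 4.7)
The plan is to derive a combinatorial formula for $\M(\be)$ and then handle each inclusion separately. Writing $\be \in \Z[\om_p]$ as $\be = \sum_{j=0}^{p-1} a_j \om_p^j$ with $a_j \in \Z$ (a non-unique representation, owing to $1 + \om_p + \cdots + \om_p^{p-1} = 0$) and applying orthogonality of additive characters of $\Z/p\Z$, the standard trace computation gives $\sum_{k=1}^{p-1}|\sigma_k(\be)|^2 = pQ - T^2 = \sum_{i<j}(a_i-a_j)^2$, where $T = \sum_j a_j$ and $Q = \sum_j a_j^2$; the latter expression makes the shift-invariance under $(a_j) \mapsto (a_j + t)$ manifest. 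Thus $\M(\be) = (pQ - T^2)/(p-1)$.

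For the inclusion $\CC_p \subseteq \{\cdots\}$, let $s \in \{0, 1, \ldots, p'\}$ be the unique element with $T \equiv \pm s \pmod p$; by shifting we may assume $T = s$. The minimum of $Q$ over $(a_j) \in \Z^p$ with $\sum a_j = s$ is attained by $s$ ones and $p - s$ zeros, giving $Q_{\min} = s$ and so $pQ - T^2 \ge s(p-s)$. Moreover $pQ - T^2 - s(p-s) \equiv s^2 - T^2 \equiv 0 \pmod p$; and $Q - T = \sum_j a_j(a_j-1)$ is a sum of products of consecutive integers and hence always even, forcing $pQ - T^2 \equiv Q + T \equiv 0 \pmod 2$ (with $s(p-s)$ likewise even). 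Combining by coprimality, $pQ - T^2 \equiv s(p-s) \pmod{2p}$, whence $\M(\be) = (s(p-s) + 2rp)/(p-1)$ for some integer $r \ge 0$.

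For the reverse inclusion it suffices to find, for each $(s, r)$ with $s \in \{0, 1, \ldots, p'\}$ and $r \ge 0$, integers $a_0, \ldots, a_{p-1}$ with $\sum a_j = s$ and $\sum a_j^2 = s + 2r$. Since $p \ge 5$ leaves us room, I would leave $p - 4$ coordinates zero and parameterize the remaining four by three free parameters $(a, b, c)$ together with a chosen fixed value $t$ of one coordinate, the last being $s - t - a - b - c$. A direct expansion yields
\[
r \;=\; F(a, b, c) - (s - t)(a + b + c) + t(t - s) + \binom{s}{2},
\]
where $F(a, b, c) = a^2 + b^2 + c^2 + ab + bc + ca$. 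The choice $t = s - 1$ combined with the sign substitution $(a, b, c) \mapsto (-a, -b, -c)$ reduces this to $r = f_2(a, b, c) + \binom{s-1}{2}$, and universality of $f_2$ then yields every $r \ge \binom{s-1}{2}$.

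The main obstacle is covering the finite set of gap values $r < \binom{s-1}{2}$ that arise for $s \ge 3$. I expect these to be captured by a parallel parameterization in which two of the four active coordinates are fixed asymmetrically rather than only one, producing a ternary form of the shape $a^2 + ab + b^2 + c^2 + (\text{linear terms})$ whose universality is precisely the content of $f_1$. Organising the two parameterizations so that their images together exhaust every admissible $(s, r)$ is the delicate combinatorial point, while the universality of $f_1$ and $f_2$ (established separately via analysis of their associated integral ternary forms) supplies the essential number-theoretic input.
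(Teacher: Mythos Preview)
Your forward inclusion and the identity $p'\M(\be)=\tfrac12(pQ-T^2)$ are correct and match the paper (your parity/congruence argument for $pQ-T^2\equiv s(p-s)\pmod{2p}$ is a nice touch). The reverse inclusion, however, is genuinely incomplete, and the sketched repair cannot work as stated.

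First, a bookkeeping slip: your parameterization actually uses five active coordinates ($a,b,c,t,\,s-t-a-b-c$), not four; your displayed formula for $r$ confirms this, so ``$p-4$'' should read ``$p-5$''. More seriously, any scheme that keeps only a \emph{bounded} number, say $m$, of coordinates nonzero cannot reach the small values of $r$ once $s$ is large. Indeed, among $m$ integers with sum $s$ one always has $Q\ge s^2/m$, hence $r=(Q-s)/2\ge s(s-m)/(2m)$; for $s=p'$ and $p$ large this lower bound is of order $p^2/m$, so the entire initial segment $r=1,2,\dots,\Theta(p^2)$ is unreachable with $m$ fixed. Your proposed ``parallel $f_1$ parameterization with two of the active coordinates fixed'' is still of this bounded-support type, so it cannot close the gap for large $p$; it is not merely unexecuted but structurally inadequate.

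The paper resolves the reverse inclusion by allowing the number of active coordinates to grow with $s$ (or with $p$). For $p\ge 11$ one starts from $s$ ones and $p-s$ zeros, picks four disjoint pairs of equal entries (possible since $\lfloor s/2\rfloor+\lfloor (p-s)/2\rfloor=p'\ge5$), and replaces each $(a,a)$ by $(a+n_i,a-n_i)$; then $Q$ increases by $2\sum n_i^2$ and Lagrange's four-square theorem gives every $r\ge0$ directly. The cases $p=5,7$ are handled individually: for $p=5$ one uses exactly the universality of $f_1$ and $f_2$, but with parameterizations tailored to each $s\in\{0,1,2\}$ so that $r=f_i$ with no additive offset (for $s=0$ a quaternary $A_4$-form is used instead); for $p=7$ one uses the class-number-one quaternary forms $A_4$ and $A_3\perp A_1$. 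In short, your $f_2$-construction with $t=s-1$ is essentially the paper's $p=5$, $s=1$ step, but the uniform-in-$p$ extension you envisage needs a different mechanism for large $s$.
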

It is easy to check that the elements specified by \eqref{E-poss-form} are all distinct.

For $p=3$ the set $\CC_3$ a proper subset of the set given by the RHS of \eqref{E-poss-form}.  
Indeed $\CC_3$ is easily seen to be the set of integers of the form
$(a+b\om_3)(a+b\om_3^2)=a^2-ab+b^2$, namely all integers $N$ with prime factorisation of the form $N=\prod_q q^{e_q}$, where $e_q$ is even for all primes $q\equiv 2\pmod{3}$. However for $p=3$ the set on the RHS of \eqref{E-poss-form} consists of all integers $N\not\equiv 2\pmod{3}$. So for instance $6,10,15$ and $18$ belong to this set, but do not belong to $\CC_3$.

For the proof in the case $p=5$ we need to prove the universality of two ternary quadratic polynomials.

\begin{theorem}\label{T-quads} Both of the quadratic polynomials
\beq\label{E-qp1}
a^2+ab+b^2+c^2+a+b+c
\enq
and
\beq\label{E-qp2}
a^2+b^2+c^2+ab+bc+ca+a+b+c
\enq
represent all positive integers for integer values of their variables (i.e., they are {\bf universal}).

\end{theorem}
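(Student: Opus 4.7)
For both polynomials, the strategy is to complete the square, reducing universality of the inhomogeneous ternary form to a representation question for a homogeneous ternary quadratic form, and then to invoke classical results.

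For $Q_2=a^2+b^2+c^2+ab+bc+ca+a+b+c$, a direct calculation gives the identity
\[
8Q_2+3=(2(b+c)+1)^2+(2(c+a)+1)^2+(2(a+b)+1)^2.
\]
For any positive integer $n$, one has $8n+3\equiv 3\pmod 8$, which is not of the excluded form $4^k(8m+7)$, so Gauss's three-square theorem supplies $8n+3=X^2+Y^2+Z^2$ with $X,Y,Z\in\Z$. A mod-$8$ analysis forces all of $X,Y,Z$ odd, and writing $X=2u+1$, $Y=2v+1$, $Z=2w+1$ the system $(b+c,c+a,a+b)=(u,v,w)$ has a solution $(a,b,c)\in\Z^3$ iff $u+v+w$ is even. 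The latter is arranged by flipping signs on $X,Y,Z$ (e.g., forcing each to be $\equiv 1\pmod 4$), completing the proof of universality for $Q_2$.

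For $Q_1=a^2+ab+b^2+c^2+a+b+c$, completion of squares yields
\[
12Q_1+7=(3b+1)^2+3(2a+b+1)^2+3(2c+1)^2,
\]
so universality of $Q_1$ reduces to showing that every $N=12n+7$ with $n\ge 1$ is representable as $x^2+3y^2+3z^2$, subject to the side constraints $x\equiv 1\pmod 3$, $z$ odd, and $y$ of parity determined by $b=(x-1)/3$. A mod-$4$ analysis of any solution $N=x^2+3y^2+3z^2$ with $N\equiv 3\pmod 4$ shows that either (i) $x,y,z$ are all odd, or (ii) $x$ is even and exactly one of $y,z$ is odd; in either case, after possibly swapping $y\leftrightarrow z$ and negating $x$, the side constraints are automatically met. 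The substantive task is therefore the homogeneous claim: every positive integer $N\equiv 7\pmod{12}$ is represented by $F(x,y,z)=x^2+3y^2+3z^2$.

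Local solvability of $F(x,y,z)=N$ for such $N$ is routine: at $p=3$ it follows from $N\equiv 1\pmod 3$ being a square residue mod $3$; at $p=2$ an enumeration shows that $F$ attains every residue class mod $8$; at all other primes $F$ is $\Z_p$-universal. The principal obstacle is the passage from local to global solvability, because $F$ is not globally regular on $\N$ (for example, $18\equiv 0\pmod 9$ is locally but not globally represented). I would handle this either by a class number / spinor genus analysis of the genus of $F$, showing that no $N\equiv 7\pmod{12}$ is a spinor exceptional integer, or more directly by proving that for every such $N$ there exists $x\in\Z$ with $3\nmid x$ for which $(N-x^2)/3$ is a non-negative sum of two squares, combining the Fermat--Euler characterization of sums of two squares with a counting/sieve argument over the $O(\sqrt N)$ admissible values of $x$. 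This ternary-form input is the main technical difficulty; the completions of squares and the parity bookkeeping are routine.
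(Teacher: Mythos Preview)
Your treatment of $Q_2$ is correct and in fact more elementary than the paper's. The paper completes the square differently, arriving at
\[
6(2a+b+c+1)^2+2(3b+c+1)^2+(4c+1)^2=24m+9,
\]
and then invokes the class-number-one property of $x^2+2y^2+6z^2$ together with a local computation. Your identity $8Q_2+3=(2(b+c)+1)^2+(2(c+a)+1)^2+(2(a+b)+1)^2$ reduces everything to Gauss's three-square theorem plus a parity adjustment, which is a genuinely nicer route.

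For $Q_1$ your completion of squares and the parity/congruence bookkeeping match the paper exactly. However, your identification of the ``principal obstacle'' rests on an error: the ternary form $F(x,y,z)=x^2+3y^2+3z^2$ has class number one (this is in Jones's 1935 tables, which the paper cites), so its genus consists of a single class and $F$ is regular; there are no spinor exceptional integers at all. Your proposed counterexample $18$ is \emph{not} locally represented at $3$: from $x^2+3y^2+3z^2=18$ one first gets $3\mid x$, then $y^2+z^2\equiv 0\pmod 3$ forces $3\mid y$ and $3\mid z$, and after dividing out one is left with $x_1^2+3y_1^2+3z_1^2=2$, giving $x_1^2\equiv 2\pmod 3$, which is impossible. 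Thus the local-to-global passage is immediate, exactly as the paper argues, and the spinor-genus or sieve workaround you sketch is unnecessary. With this single correction your $Q_1$ argument is complete and coincides with the paper's.
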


Of course it would be interesting to study $\CC_n:=\{\M(\be)\mid \be\in\Z[\om_n]\}$ for $n$ composite, too.

\subsection{Background} The study of cyclotomic integers began in earnest with a paper of Raphael Robinson in 1965 \cite{Robinson1965}. In it he stated two problems and proposed five conjectures about them. Schinzel \cite{Schinzel1966} solved his second problem and proved his third conjecture. In 1968 Jones \cite{Jones1968} proved Robinson's fifth conjecture. Cassels solved Robinson's Conjecture 2 in \cite{Cassels1969}, with the help of his $\M$ function. Loxton \cite{Loxton1974} solved Robinson's first problem, and also improved on Schinzel's solution of the second problem. In 2013 F. Robinson and M. Wurtz \cite{RobinsonWurtz2013} proved Robinson's fourth conjecture. (They also said that the first conjecture had been proved, although this does not seem to be the case.)

Cassels \cite{Cassels1969} also showed that the only $\M(\be)<2$ were for $\be$ that can be written as a sum of at most two roots of unity; this implies that $2$ is the smallest limit point of $\CC$.

In 2011 Calegari, Morrison and Snyder \cite{CalegariMorrisonSnyder2011} studied cyclotomic integers $\be$ with a view to applications to fusion categories and subfactors. As part of this study (their Theorem 9.0.1) they found all $\be$ with $\M(\be)<9/4$.

\section{Proof of Theorem \ref{T-1}}

For the proof, we need a qualitative version of a very precise theorem of Loxton.
\begin{theorem}[{{\cite[eqn. (6.1)]{Loxton1972}}}]\label{T-Loxg} There is a strictly increasing (concave) function $g$ such that for every cyclotomic integer $\be$ we have $\M(\be)\ge g(\NC(\be))$.
\end{theorem}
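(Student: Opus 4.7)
The plan is to establish the lower bound by induction on the number of distinct prime divisors of the conductor of $\be$, following the template of Loxton's argument in \cite{Loxton1972}. Because only a qualitative conclusion is required, the function $g$ may be chosen to grow as slowly as one likes, say $g(N)=c\log\log(N+e^e)$ (suitably adjusted on a bounded initial interval so as to remain concave and strictly increasing everywhere). This provides ample slack at each inductive step.

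First I would fix $\be\in\Z[\om_n]$ together with a shortest representation $\be=\zeta_1+\cdots+\zeta_N$ as a sum of $N=\NC(\be)$ roots of unity, and pick a prime $p\mid n$. Expanding $\be$ in a $\om_p$-basis over $\Z[\om_{n/p}]$ as $\be=\sum_{j=0}^{p-2}b_j\om_p^j$ and collecting the $\zeta_i$ according to their $\om_p$-coordinate, each $b_j$ is itself a sum of roots of unity in the smaller ring, and one arranges $\sum_j\NC(b_j)\le N$. Averaging $|\be|^2$ over the Galois subgroup $\Gal(\Q(\om_n)/\Q(\om_{n/p}))$ collapses most of the cross terms $b_j\overline{b_k}\om_p^{j-k}$ via the fact that the trace of any non-trivial $p$th root of unity over $\Q$ equals $-1$, producing an inequality of the shape
\[
\M(\be)\ge\sum_{j=0}^{p-2}\M(b_j)+E_p,
\]
where $E_p$ is a cross-term correction bounded in terms of the $\M(b_j)$ alone.

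Then I would apply the induction hypothesis to each $b_j$, which lives in a cyclotomic field of strictly smaller conductor, and invoke the subadditivity of concave functions vanishing at the origin (for such a $g$, $g(x+y)\le g(x)+g(y)$) to obtain
\[
\sum_{j=0}^{p-2}g(\NC(b_j))\ge g\!\left(\sum_{j=0}^{p-2}\NC(b_j)\right),
\]
which, combined with $\sum_j\NC(b_j)\le N$ and the strict monotonicity of $g$, recovers $g(N)$ up to a bounded loss that $E_p$ must compensate. Iterating across the $\omega(n)$ primes dividing $n$, and handling the base case of trivial conductor directly (or by invoking Cassels' theorem that $\M(\be)<2$ forces $\NC(\be)\le 2$), yields the required bound.

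The hard part will be controlling the error $E_p$ uniformly in $p$ and arranging that the $O(1)$ loss incurred at each of the $\omega(n)$ prime-reduction steps is absorbed by the slow growth of $g$; this is precisely why $g$ is forced to grow at most doubly logarithmically, which is the essence of Loxton's quantitative finding. A secondary nuisance is that the $\om_p$-expansion depends on the choice of primitive $p$th root, so at each step one must pick the decomposition that minimises $\sum_j\NC(b_j)$ in order to legitimately feed the induction hypothesis.
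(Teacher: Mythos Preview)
The paper does not give a proof of this theorem at all: it is stated purely as a citation of Loxton's result \cite[eqn.~(6.1)]{Loxton1972}, and only the qualitative consequence (that $\M(\be)\le B$ forces $\NC(\be)\le B'$) is ever used. So there is no ``paper's own proof'' to compare your proposal against; you are attempting far more than the authors do.

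As for the sketch itself, it is a recognisable outline of Loxton's inductive strategy, but there is an internal inconsistency that would bite if you tried to fill it in. You expand in the \emph{basis} $1,\om_p,\dots,\om_p^{p-2}$, yet you justify $\sum_j\NC(b_j)\le N$ by binning the $\zeta_i$ according to their $\om_p$-exponent. Those two choices are incompatible: a summand $\zeta_i=\eta_i\om_p^{p-1}$, rewritten via $\om_p^{p-1}=-(1+\om_p+\cdots+\om_p^{p-2})$, contributes to \emph{every} basis coefficient $b_j$, so the basis expansion need not satisfy $\sum_j\NC(b_j)\le N$. If instead you bin over all $p$ residues $j=0,\dots,p-1$ (a redundant, non-unique decomposition), then the binning argument gives $\sum_j\NC(b_j)\le N$, while reassembling minimal representations of the $b_j$ gives $N=\NC(\be)\le\sum_j\NC(b_j)$, hence equality --- which is exactly what you need to feed monotonicity of $g$. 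Note also that, as written, your chain of inequalities points the wrong way: from $\sum_j\NC(b_j)\le N$ and monotonicity you would only get $g(\sum_j\NC(b_j))\le g(N)$, the opposite of what is required; it is the reverse inequality $\sum_j\NC(b_j)\ge N$ (which does hold, as just noted) that closes the induction. Finally, the genuine analytic content --- controlling the cross-term $E_p$ uniformly and absorbing the $O(1)$ loss over $\omega(n)$ steps --- is exactly where Loxton's work lies, and your proposal correctly flags this as the hard part without resolving it.
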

Here $\NC(\be)$ is the smallest number of roots of unity whose sum is $\be$.
Thus if $\M(\be)\le B$ then $\NC(\be)\le B'$ for some $B'$.

 For any algebraic integer $\al$ we denote the `mean trace' $(\text{trace }(\al))/[\Q(\al):\Q]$ of $\al$ by $\tr(\al)$. This is the mean of the conjugates of $\al$. So $\M(\be)=\tr(|\be|^2)$.
 We need the following basic property of the mean trace.
 \begin{lemma} \label{L-tradd} For any algebraic numbers $\al,\ga$ we have
 \beq\label{E-tradd}
 \tr(\al+\ga)=\tr(\al)+\tr(\ga).
\enq
\end{lemma}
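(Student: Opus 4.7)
The plan is to reduce everything to a single number field containing both $\al$ and $\ga$, and then invoke the ordinary linearity of the field trace.

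First I would observe the key invariance property of the mean trace: if $K$ is any number field containing an algebraic number $\de$, then
\[
\tr(\de)=\frac{\operatorname{trace}_{K/\Q}(\de)}{[K:\Q]}.
\]
This follows from the tower formula $\operatorname{trace}_{K/\Q}=\operatorname{trace}_{\Q(\de)/\Q}\circ\operatorname{trace}_{K/\Q(\de)}$ together with the fact that $\operatorname{trace}_{K/\Q(\de)}(\de)=[K:\Q(\de)]\cdot\de$, so that
\[
\operatorname{trace}_{K/\Q}(\de)=[K:\Q(\de)]\cdot\operatorname{trace}_{\Q(\de)/\Q}(\de),
\]
and dividing by $[K:\Q]=[K:\Q(\de)]\cdot[\Q(\de):\Q]$ gives the stated ratio, which matches the definition $\tr(\de)=\operatorname{trace}(\de)/[\Q(\de):\Q]$.

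Next I would choose $K=\Q(\al,\ga)$, which contains each of $\al$, $\ga$ and $\al+\ga$. Applying the invariance property three times yields
\[
\tr(\al)=\frac{\operatorname{trace}_{K/\Q}(\al)}{[K:\Q]},\quad \tr(\ga)=\frac{\operatorname{trace}_{K/\Q}(\ga)}{[K:\Q]},\quad \tr(\al+\ga)=\frac{\operatorname{trace}_{K/\Q}(\al+\ga)}{[K:\Q]}.
\]
Since the ordinary field trace $\operatorname{trace}_{K/\Q}$ is $\Q$-linear (it is a sum over embeddings of $K$ into $\overline\Q$), we have $\operatorname{trace}_{K/\Q}(\al+\ga)=\operatorname{trace}_{K/\Q}(\al)+\operatorname{trace}_{K/\Q}(\ga)$, and dividing by $[K:\Q]$ gives \eqref{E-tradd}.

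There is no real obstacle here — the only subtlety is the first step, namely checking that the mean-trace definition (which is tied to $\Q(\de)$) agrees with the scaled trace from any larger field $K$; once that is in hand, additivity is inherited directly from the ambient field trace. The case where $\al$, $\ga$ or $\al+\ga$ equals $0$ is trivial and can be absorbed by the convention that the mean trace of $0$ is $0$.
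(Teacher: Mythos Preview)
Your proof is correct and follows essentially the same route as the paper: pass to a common overfield and use linearity of the field trace there. The paper takes $F$ to be the normal closure of $\Q(\al,\ga)$ and writes $\tr(\de)=\frac1{[F:\Q]}\sum_{\sigma\in\Gal(F/\Q)}\sigma(\de)$, while you take $K=\Q(\al,\ga)$ and justify the invariance step explicitly via the tower formula; these are minor variants of the same idea.
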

\begin{proof}
Let $F$ be the normal closure of $\Q(\al,\ga)$.
Then
\[
\tr(\al) = \frac1{[F:\Q]}\sum_{\sigma\in\Gal(F/\Q)}\sigma(\al),
\]
from which, using the corresponding formula for $\be$ and for $\al+\be$, the result follows.
\end{proof}

\begin{lemma}\label{L-nonzerotrace} For $\be$  a nonzero cyclotomic integer, with say $\be\in\Z[\om_n]$,  there is some power $\om_n^i$ of $\om_n$ such that $\om_n^i\be$ has nonzero trace.
  \end{lemma}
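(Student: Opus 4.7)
The strategy is proof by contradiction via non-degeneracy of the trace form on $\Q(\om_n)/\Q$. I would suppose that $\tr(\om_n^i \be)=0$ for every $i=0,1,\ldots,\phi(n)-1$, and aim to conclude $\be=0$, which contradicts the hypothesis that $\be$ is nonzero.

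First I would translate the mean-trace condition into an ordinary trace condition inside the single field $\Q(\om_n)$. Since $\om_n^i\be\in\Q(\om_n)$, transitivity of the trace gives $\tr(\om_n^i\be)=\phi(n)^{-1}\trace_{\Q(\om_n)/\Q}(\om_n^i\be)$, so the hypothesis becomes
\[
\trace_{\Q(\om_n)/\Q}(\om_n^i\be)=0 \quad\text{for all } i=0,1,\ldots,\phi(n)-1.
\]
Because $\{1,\om_n,\om_n^2,\ldots,\om_n^{\phi(n)-1}\}$ is a $\Q$-basis of $\Q(\om_n)$ and the trace is $\Q$-linear, this forces $\trace_{\Q(\om_n)/\Q}(y\be)=0$ for every $y\in\Q(\om_n)$.

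Since $\Q(\om_n)/\Q$ is separable, its trace pairing $(y,z)\mapsto\trace_{\Q(\om_n)/\Q}(yz)$ is non-degenerate, so the previous step yields $\be=0$ — a contradiction. There is no genuine obstacle here; the only point requiring care is the reduction from the paper's mean trace $\tr$ (defined on $\Q(\al)$, via Lemma \ref{L-tradd}'s ambient field) to the trace from the possibly larger field $\Q(\om_n)$, which is routine via transitivity of the trace.
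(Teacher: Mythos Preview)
Your argument is correct. The reduction from the mean trace $\tr$ to $\trace_{\Q(\om_n)/\Q}$ is exactly right (transitivity gives the factor $\phi(n)^{-1}$, which is harmless for vanishing), and then non-degeneracy of the trace form on the separable extension $\Q(\om_n)/\Q$ finishes the job.

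The paper takes a slightly different, more hands-on route: assuming $\trace(\om_n^i\be)=0$ for all $i$, it observes that then in particular (writing $\be=\sum_k a_k\om_n^k$) the trace of $\sum_k a_k\om_n^{-k}\be=\overline{\be}\,\be=|\be|^2$ vanishes, which is impossible since every conjugate of $|\be|^2$ is strictly positive. In other words, rather than invoking abstract non-degeneracy, the paper exhibits the explicit witness $y=\overline{\be}$ and uses positivity. Your approach is cleaner and works verbatim in any separable extension; the paper's is entirely self-contained and dovetails with the theme of the paper, since the contradiction is precisely that the Cassels height $\M(\be)=\tr(|\be|^2)$ would be zero.
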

  \begin{proof} We can write $\be=\sum_{k=0}^{d-1} a_k\om_n^k$, where the $a_k$ are integers, and $d=\phi(n)$. Then the trace of $\be$ (the sum of its conjugates) is 
  \[
  \frac{d'}{d}\sum_{\substack{ j=1 \\ \gcd(j,n)=1}}^{d-1}\sum_{k=0}^{d-1} a_k\om_n^{jk},
  \]
  where $d'=[\Q(\be):\Q]$. Suppose that the trace  of $\om_n^{i}\be$ is $0$ for all $i=0\cc n-1$. Then the traces of all $a_k\om_n^{-k}\be$ would be $0$, and so the trace of $\sum_{k=0}^{d-1} a_k\om_n^{-k}\be=|\be|^2$ would be $0$. But we know that
   the conjugates, $|\be_j|^2$ say, of $|\be|^2$ are all positive, so its trace is positive.
  \end{proof}

 We use $\mu_\phi(n)$ to denote $\mu(n)/\phi(n)$, where $\mu$ is the M\"obius $\mu$-function, and $\phi$ is the Euler $\phi$-function.  
 Thus the mean trace of $\om_n$ is $\mu_\phi(n)$. 
Lemma \ref{L-tradd} states that the mean trace is additive.
Of course it is not generally multiplicative, but there is a special case where this property too holds.
\begin{lemma}\label{L:mult}
Let $m$, $n$ be coprime integers and let $\alpha\in\mathbb{Q}(\om_n)$.
Then
\begin{itemize}
    \item[(i)] $\tr(\om_m\al) = \tr(\om_m)\tr(\al) = \mu_\varphi(m)\tr(\al)$;
    \item[(ii)] if also $m$ is odd, one still has $\tr(\om_{2m}\al) = \tr(\om_{2m})\tr(\al) = -\mu_\varphi(m)\tr(\al)$, regardless of the parity of $n$.
\end{itemize}
\end{lemma}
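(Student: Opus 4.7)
The plan is to exploit the fundamental fact that, for $\gcd(m,n)=1$, one has $\Q(\om_m)\cap\Q(\om_n)=\Q$, so that the compositum satisfies
\[
\Gal(\Q(\om_{mn})/\Q) \;\cong\; \Gal(\Q(\om_m)/\Q) \times \Gal(\Q(\om_n)/\Q),
\]
with the first factor acting trivially on $\Q(\om_n)$ and vice versa. It is then natural to first prove the following slight generalization of (i): for any $\beta\in\Q(\om_m)$ and any $\al\in\Q(\om_n)$,
\[
\tr(\beta\al) \;=\; \tr(\beta)\,\tr(\al).
\]
The argument is immediate from the definition of mean trace: $\beta\al\in\Q(\om_{mn})$, and writing a general $\sigma\in\Gal(\Q(\om_{mn})/\Q)$ as $(\sigma_1,\sigma_2)$ one has $\sigma(\beta\al)=\sigma_1(\beta)\sigma_2(\al)$, so the average over $\sigma$ factors as the product of the averages of $\sigma_1(\beta)$ and $\sigma_2(\al)$ over the respective factor groups, i.e.\ as $\tr(\beta)\tr(\al)$.

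Part (i) then follows by taking $\beta=\om_m$ and recalling the classical identity $\tr(\om_m)=\mu(m)/\phi(m)=\mu_\phi(m)$ (equivalent to $\sum_{\gcd(j,m)=1}\om_m^j=\mu(m)$).

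For (ii) I would split on the parity of $n$. If $n$ is odd, then $\gcd(2m,n)=1$ and the argument of (i) applies verbatim with $2m$ in place of $m$, giving $\tr(\om_{2m}\al)=\mu_\phi(2m)\tr(\al)$; the elementary identities $\mu(2m)=-\mu(m)$ and $\phi(2m)=\phi(m)$ (both valid since $m$ is odd) yield $\mu_\phi(2m)=-\mu_\phi(m)$. The substantive case is $n$ even: here $\gcd(2m,n)=2$, so the Galois group of the compositum of $\Q(\om_{2m})$ and $\Q(\om_n)$ does \emph{not} split along the $2m$-versus-$n$ boundary, and a direct application of (i) with $2m$ in place of $m$ is unavailable. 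The key observation that unlocks this case is that, precisely because $m$ is odd, $\Q(\om_{2m})=\Q(\om_m)$; concretely, $\om_{2m}=-\om_m^{(m+1)/2}$ (the right-hand side squares to $\om_m$ and has multiplicative order $2m$). Thus $\om_{2m}$ itself lies in $\Q(\om_m)$, and the generalized form of (i) applies with $\beta=-\om_m^{(m+1)/2}$. Since $\gcd((m+1)/2,m)=1$, this $\om_m^{(m+1)/2}$ is a primitive $m$th root of unity, so $\tr(\om_m^{(m+1)/2})=\tr(\om_m)=\mu_\phi(m)$, and the formula $\tr(\om_{2m}\al)=-\mu_\phi(m)\tr(\al)$ falls out. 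This even-$n$ subcase is the one requiring care: the main point is to notice that one does not need $\Q(\om_{2m})$ and $\Q(\om_n)$ to be linearly disjoint, only $\Q(\om_m)$ and $\Q(\om_n)$, and that the oddness of $m$ lets one realise $\om_{2m}$ inside $\Q(\om_m)$ at no cost.
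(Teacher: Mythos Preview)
Your argument is correct, and for part (i) it is essentially identical to the paper's: both use that $\Gal(\Q(\om_{mn})/\Q)$ factors as a product, with each factor acting trivially on the other cyclotomic subfield, so the mean trace of a product factors.

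For part (ii) the paper proceeds more directly. You split on the parity of $n$, handling $n$ odd by a straight application of (i) with $2m$ in place of $m$, and $n$ even by observing that $\om_{2m}=-\om_m^{(m+1)/2}$ lies in $\Q(\om_m)$, so that the generalised (i) applies with $\beta=-\om_m^{(m+1)/2}$. The paper instead makes only this last observation, in the equivalent form ``$-\om_{2m}$ is a primitive $m$th root of unity'' (valid whenever $m$ is odd), and then uses $\tr(-\be)=-\tr(\be)$ to reduce (ii) to (i) in one line, with no case distinction on $n$. Your even-$n$ argument is exactly this; your odd-$n$ case is an unnecessary detour, since the same observation handles it too. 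So the two proofs share the key idea, but the paper's is cleaner.
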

\begin{proof}
Since $m$ and $n$ are coprime, $\om_m\om_n$ is a primitive $mn$-th root of unity, and the $\varphi(mn)$ automorphisms of $\Q(\om_m\om_n)$ are defined by $\omega_m\omega_n\mapsto\omega_m^a\omega_n^b$ where $a$ is prime to $m$ and $b$ is prime to $n$.
From this the formula in (i) is immediate.
For (ii), given $m$ is odd one has that $-\omega_{2m}$ is a primitive $m$th root of unity and since $\tr(-\be) = -\tr(\be)$ one deduces (ii) from (i).
\end{proof}

\begin{prop} \label{P-2lim} Let $\mathcal L$ be an infinite  increasing sequence of positive integers, and $\ga_1$ and $\ga_2$ be nonzero cyclotomic integers. Then
\[
\lim_{\substack{\l\to\infty \\ \l\in \mathcal L}} \M(\ga_1+\om_{\l}\ga_2)= \M(\ga_1)+\M(\ga_2).
\]
Also, $\mathcal L$ can be chosen so that infinitely many of the values $\M(\ga_1+\om_{\l}\ga_2)$ are distinct, so that $\M(\ga_1)+\M(\ga_2)$   is a genuine limit point of
the sequence $\{\M(\ga_1+\om_{\l}\ga_2)\}_{\l\in\mathcal L}$. Furthermore, $\mathcal L$ can be chosen so that the limit is approached either from above or from below.
\end{prop}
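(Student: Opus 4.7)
The plan is to reduce the question to a single trace. Writing
\[
|\gamma_1+\omega_\ell\gamma_2|^2=|\gamma_1|^2+|\gamma_2|^2+\omega_\ell\gamma_2\bar{\gamma_1}+\omega_\ell^{-1}\overline{\gamma_2\bar{\gamma_1}},
\]
applying additivity of the mean trace (Lemma~\ref{L-tradd}), and using that $\tr$ takes rational values (so the two conjugate cross-terms contribute the same trace), one obtains
\[
\M(\gamma_1+\omega_\ell\gamma_2)=\M(\gamma_1)+\M(\gamma_2)+2\tr(\omega_\ell\delta),\qquad \delta:=\gamma_2\bar{\gamma_1}.
\]
The whole proposition thus reduces to engineering $\mathcal L$ so that $\tr(\omega_\ell\delta)\to 0$ with the desired sign and distinctness behaviour.

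Fix $n$ with $\gamma_1,\gamma_2\in\Z[\omega_n]$ and take $\mathcal L$ to consist of integers coprime to $n$. Lemma~\ref{L:mult}(i) then yields $\tr(\omega_\ell\delta)=\mu_\phi(\ell)\tr(\delta)$, and since $|\mu_\phi(\ell)|\le 1/\phi(\ell)\to 0$ as $\ell\to\infty$, the basic limit assertion follows.

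For the refined statements, the generic case $\tr(\delta)\ne 0$ is easy: I would restrict $\mathcal L$ to squarefree $\ell$ coprime to $n$ with $\phi(\ell)$ strictly increasing (for instance, a set of primes $>n$); then $|\mu_\phi(\ell)|=1/\phi(\ell)$ is strictly monotone, so the values $\M(\gamma_1+\omega_\ell\gamma_2)$ are pairwise distinct. The sign of $\mu_\phi(\ell)=(-1)^k/\phi(\ell)$ is determined by the number $k$ of prime divisors of $\ell$, so sticking to primes ($k=1$) or to products of two distinct primes ($k=2$) pins down the sign of $2\mu_\phi(\ell)\tr(\delta)$ and hence selects whether the limit is approached from above or from below.

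The genuine obstacle is the degenerate case $\tr(\delta)=0$, which collapses the above construction to the constant value $\M(\gamma_1)+\M(\gamma_2)$ and gives no distinctness. To work around this I would invoke Lemma~\ref{L-nonzerotrace} on the nonzero cyclotomic integer $\delta$ to produce an exponent $i$ with $\tr(\omega_n^i\delta)\ne 0$, and set $n'=n/\gcd(i,n)$, so that $\omega_n^i$ is a primitive $n'$-th root. Taking $\mathcal L=\{n'm:m\text{ a prime coprime to }n\}$ and choosing the primitive $\ell$-th root of unity to be $\omega_\ell:=\omega_n^i\omega_m$ (a primitive $n'm$-th root, since $\gcd(n',m)=1$), Lemma~\ref{L:mult}(i) applied with $\alpha=\omega_n^i\delta\in\Z[\omega_n]$ then gives
\[
\tr(\omega_\ell\delta)=\tr(\omega_m\cdot\omega_n^i\delta)=\mu_\phi(m)\tr(\omega_n^i\delta),
\]
which is nonzero, tends to $0$ as $m\to\infty$ through primes, and whose sign can be flipped by replacing single primes $m$ by products of two distinct primes coprime to $n$. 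This recovers both distinctness and a free choice of the direction of approach, exactly as in the generic case.
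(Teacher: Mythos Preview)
Your argument for the refined parts (distinctness and choice of direction) is essentially the paper's: both use Lemma~\ref{L-nonzerotrace} to pick an $i$ with $\tr(\om_n^i\delta)\ne 0$ and then take $\om_\ell=\om_n^i\om_{\ell^*}$ with $\ell^*$ coprime to $n$, applying Lemma~\ref{L:mult}(i). Your sign-flip via products of two primes is a harmless variant of the paper's use of $2\ell^*$ and Lemma~\ref{L:mult}(ii). (One small remark: there is no need to split into the ``generic'' and ``degenerate'' cases, since the construction via Lemma~\ref{L-nonzerotrace} already covers both.)

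There is, however, a genuine gap in your treatment of the first assertion. The proposition asserts that the limit equals $\M(\ga_1)+\M(\ga_2)$ for \emph{every} infinite increasing sequence $\mathcal L$, not just for one of your choosing. You only establish $\tr(\om_\ell\delta)\to 0$ when $\ell$ is restricted to integers coprime to $n$, because that is the hypothesis of Lemma~\ref{L:mult}(i). For general $\ell$ the identity $\tr(\om_\ell\delta)=\mu_\phi(\ell)\tr(\delta)$ simply fails, so your argument does not cover arbitrary $\mathcal L$. The paper handles this by expanding $\delta=\sum_k a_k\om_n^k$ and noting that each $\om_\ell\om_n^k$ is a primitive $\ell'$-th root of unity with $\ell'\to\infty$ as $\ell\to\infty$ (indeed $\ell'\ge \ell/n$), whence each term $a_k\mu_\phi(\ell')\to 0$. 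You should insert this step; the multiplicativity shortcut is not available in the required generality.
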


\begin{proof} Now from Lemma \ref{L-tradd}
\begin{align}
\M(\ga_1+\om_{\l}\ga_2)&=\tr(|\ga_1+\om_{\l}\ga_2|^2) \notag\\
&= \tr(|\ga_1|^2)+  \tr(|\ga_2|^2)+ \tr(\om_{-\l}\ga_1\overline{\ga_2}) + \tr(\om_{\l}\overline{\ga_1}\ga_2) \notag\\ 
&= \M(\ga_1)+\M(\ga_2)+ \tr(\om_{\l}^{-1}\ga_1\overline{\ga_2}) + \tr(\om_{\l}\overline{\ga_1}\ga_2).  \label{E-ga12}
\end{align}
Choosing $n$ so that $\ga_1,\ga_2\in\Q(\om_n)$, with say 
\[
\ga_1\overline{\ga_2}=\sum_k a_k\om_n^k,
\]
we see  that 
\[
\tr(\om_{\l}^{-1}\ga_1\overline{\ga_2}) = \tr(\om_{\l}\overline{\ga_1}\ga_2) =\sum_ka_k\tr(\om_\l\om_n^{-k})=\sum_ka_k\tr(\om_{\l'})=\sum_k a_k\mu_\phi(\l'),
\]
where $\om_\l\om_n^{-k}=\om_{\l'}$, say, where $\l'$ depends on $k$. Since $\l'\to\infty$ as $\l\to\infty$,
and $\mu_\phi(\l')\to 0$ as $\l'\to\infty$, we see that as $\l\to\infty$
\[
\M(\ga_1+\om_{\l}\ga_2)\to \M(\ga_1)+\M(\ga_2),
\]
as claimed.

To ensure that this is a genuine limiting process, we need to have $\M(\ga_1+\om_{\l}\ga_2)\ne \M(\ga_1)+\M(\ga_2)$ for infinitely many values of $\l$.
 We now show that $\L$ can be chosen so that this is true.

From Lemma \ref{L-nonzerotrace}, we can choose an integer $i$ such that $\tr(\om_n^i \overline{\ga_1}\ga_2)\ne 0$. Then also $\tr(\om_n^{-i}\ga_1\overline{\ga_2})\ne 0$. Next, define the $\l$'s by  $\om_\l=\om_{\l^*}\,\om_n^{i}$, where the $\l^*$'s are odd primes not dividing $n$.
Then, using Lemma \ref{L:mult}(i),
\[
\tr(\om_{\l}\overline{\ga_1}\ga_2)=\tr(\om_{\l^*}\,\om_n^i\overline{\ga_1}\ga_2)=\tr(\om_{\l^*})\tr(\om_n^i\overline{\ga_1}\ga_2)=-\frac1{\l^*-1}\tr(\om_n^i\overline{\ga_1}\ga_2),
\]
which is nonzero for all $\l$. Hence, from \eqref{E-ga12}, $\M(\ga_1+\om_{\l}\ga_2)$ tends to  $\M(\ga_1)+\M(\ga_2)$ from either above or below (say, above), depending on the sign of $\tr(\om_n^i\overline{\ga_1}\ga_2)$; it never equals $\M(\ga_1)+\M(\ga_2)$.

Finally, if we replace $\l^*$ by $2\l^*$ in the argument  (and see Lemma \ref{L:mult}(ii)), then $-\tfrac1{\l^*-1}$ is replaced by $\tfrac1{\l^*-1}$, so that  $\M(\ga_1+\om_{\l}\ga_2)$ tends to  $\M(\ga_1)+\M(\ga_2)$ from below.
\end{proof}

Note that Proposition \ref{P-2lim} tells us that $2\CC\subseteq\CC^{(1)}$.

\begin{prop} \label{P-bygum} Let $\ga_0,\ga_1\cc \ga_r$ be fixed cyclotomic integers, and for all $n\ge 1$ define
\[
\be_n:=\ga_0+\ga_1\om_{n_1}+ \ga_2\om_{n_2}+\cdots+\ga_r\om_{n_r},
\]
where $n_1\cc n_r$ are integers each tending to infinity as $n\to\infty$, and such that for all $k,\l$ with $1\le k<\l\le r$ the order of $\om_{n_{\l}}/\om_{n_{k}}$
also tends to infinity as $n\to\infty$. Then the sequence $\{\M(\be_n)\}$ converges, say to $\M(\be)$, with 
\[
\M(\be)=\M(\ga_0)+\M(\ga_1)+\cdots+\M(\ga_r).
\]
\end{prop}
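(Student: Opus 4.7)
The plan is to generalize the computation in Proposition \ref{P-2lim} to an arbitrary number of summands. Setting $\om_{n_0}=1$ for convenience, I expand
\[
|\be_n|^2 \;=\; \sum_{i,j=0}^{r} \ga_i \overline{\ga_j}\, \om_{n_i}\om_{n_j}^{-1},
\]
and apply additivity of the mean trace (Lemma \ref{L-tradd}) to split $\M(\be_n)=\tr(|\be_n|^2)$ into $(r+1)^2$ summands. The $r+1$ diagonal pieces ($i=j$) each contribute $\tr(|\ga_i|^2)=\M(\ga_i)$, producing the desired limit $\sum_{i=0}^{r}\M(\ga_i)$ on the nose. The whole task therefore reduces to showing that each off-diagonal summand $\tr(\ga_i\overline{\ga_j}\,\om_{n_i}\om_{n_j}^{-1})$ tends to $0$ as $n\to\infty$.

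For this I would establish an auxiliary lemma: if $\al$ is any fixed cyclotomic integer and $\{\zeta_m\}$ is a sequence of roots of unity whose orders tend to infinity, then $\tr(\al\zeta_m)\to 0$. Choose $N$ with $\al\in\Z[\om_N]$ and write $\al=\sum_k a_k \om_N^k$ with finitely many integers $a_k$. Then
\[
\tr(\al\zeta_m) \;=\; \sum_k a_k\,\tr(\om_N^k\zeta_m),
\]
and each product $\om_N^k\zeta_m$ is itself a root of unity whose order is at least the order of $\zeta_m$ divided by $N$, hence also tends to infinity. Since $\tr(\om_t)=\mu_\phi(t)\to 0$ as $t\to\infty$, every summand in the finite sum tends to $0$, and the claim follows.

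Applying this lemma to each off-diagonal piece: when exactly one of $i,j$ is $0$, the root of unity appearing is $\om_{n_i}^{\pm 1}$, whose order $n_i$ tends to infinity by the first hypothesis; when $1\le i,j\le r$ with $i\ne j$, the root of unity is $\om_{n_i}\om_{n_j}^{-1}$, whose order tends to infinity by the second hypothesis of the proposition. In each case the fixed cyclotomic integer multiplying the root of unity is $\ga_i\overline{\ga_j}$, so the lemma applies and that piece vanishes in the limit. Adding up the $(r+1)^2$ contributions then gives the stated formula.

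The main obstacle is purely the bookkeeping together with the motivation for the second hypothesis: having $n_i,n_j\to\infty$ alone does not force the order of $\om_{n_i}\om_{n_j}^{-1}$ to grow (e.g.\ if $n_j$ divides $n_i$ the ratio might stay of small order), so without that extra hypothesis on ratios a cross term could fail to vanish. With it in force, the argument is a clean multi-term extension of the two-term computation in Proposition \ref{P-2lim}.
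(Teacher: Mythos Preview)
Your proposal is correct and follows essentially the same approach as the paper: expand $|\be_n|^2$ with $\om_{n_0}=1$, apply additivity of the mean trace, collect the diagonal pieces as $\sum_i \M(\ga_i)$, and kill each off-diagonal term by writing the fixed factor $\ga_i\overline{\ga_j}$ as a $\Z$-combination of powers of a fixed $\om_N$ so that the remaining root of unity has order tending to infinity and mean trace $\mu_\phi(\cdot)\to 0$. The only cosmetic difference is that you package the vanishing of $\tr(\al\zeta_m)$ as a standalone lemma, whereas the paper carries out the same estimate inline.
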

\begin{proof} Now putting $n_0=1$ we have
\[
|\be_n|^2=\sum_{k=0}^r |\ga_k|^2+\sum_{\substack{k,\l=0 \\ k\ne\l}}^r \ga_k\overline{\ga_\l}\frac{\om_{n_k}}{\om_{n_{\l}}}.
\]
Choose an integer $t$ so that all the $\ga_k$ belong to $\Q(\om_t)$.
Then taking the mean trace of this expression we obtain
$\M(\be_n)=\sum_{k=0}^r \M(\ga_k)$ plus a sum of terms of the form $\tr(a\om_t^h \om_{n_k}/\om_{n_{\l}})$, where $a$ and $h$ are integers.  Putting $\om_t^h \om_{n_k}/\om_{n_{\l}}=\om_N$ say, we have 
\[
\tr(a\om_t^h \om_{n_k}/\om_{n_{\l}})=a\mu_\phi(N).
\]
Since $N\to\infty$ as $n\to\infty$ we see that as $n\to\infty$ these terms all tend to $0$, so that $\M(\be_n)\to\sum_{k=0}^r \M(\ga_k)$.
\end{proof}

\begin{prop} \label{P-klim} Let $k\ge 1$. Every element of $(k+1)\CC$ belongs to $\CC^{(k)}$ and is a limit from both sides of elements of $k\CC$.
\end{prop}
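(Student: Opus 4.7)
The plan is to argue by induction on $k$, leveraging Proposition \ref{P-2lim} as the base case and then peeling off one summand at each step.

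For the base case $k=1$, the statement reduces to: every element of $2\CC$ belongs to $\CC^{(1)}$ and is a two-sided limit of elements of $\CC$. This is precisely the content of Proposition \ref{P-2lim}: given $c = \M(\ga_1) + \M(\ga_2) \in 2\CC$, the sequence $\M(\ga_1 + \om_\l \ga_2)$ consists of distinct elements of $\CC$ converging to $c$, and the last sentence of that proposition lets us choose $\mathcal L$ so that the approach is from above, or, by doubling $\l^*$, from below.

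For the inductive step, suppose the result holds for $k-1 \ge 1$. Given $c \in (k+1)\CC$, write $c = c_1 + y$ with $c_1 \in \CC$ and $y \in k\CC$. By the inductive hypothesis applied to $y$, there exists a sequence of distinct $x_n \in (k-1)\CC$ converging to $y$ from a prescribed side (above or below, as we choose). Set $c_n := c_1 + x_n$. Since $c_1 \in \CC$ and $x_n$ is a sum of $k-1$ elements of $\CC$, we have $c_n \in k\CC$. The $c_n$ are distinct (translation by $c_1$ is injective) and converge to $c$ from the chosen side; this establishes that $c$ is a two-sided limit of elements of $k\CC$. Moreover, by the inductive hypothesis we also have $k\CC \subseteq \CC^{(k-1)}$, so $\{c_n\}$ is a sequence of distinct elements of $\CC^{(k-1)}$ converging to $c$, whence $c \in \CC^{(k)}$.

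No single step is really an obstacle here: the only thing to track carefully is the two-sidedness, but since it is inherited by translation it transfers painlessly through the induction. The real conceptual work has already been done in Proposition \ref{P-2lim}, where Lemma \ref{L:mult} was used to force the limit strictly from either above or below; the present proposition is just an inductive packaging of that single-step approximation.
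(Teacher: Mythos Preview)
Your proof is correct and follows the same inductive strategy as the paper, with only a cosmetic difference in the inductive step: the paper merges the last two summands via Proposition~\ref{P-2lim}, setting $m_{k,\l}=\M(\ga_1)+\cdots+\M(\ga_{k-1})+\M(\ga_k+\om_\l\ga_{k+1})\in k\CC$ and letting $\l\to\infty$, whereas you peel off one summand and translate the $(k-1)\CC$-approximants supplied by the induction hypothesis. Both routes are equally short and rest entirely on Proposition~\ref{P-2lim} for the genuine two-sided convergence.
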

\begin{proof}
The case $k=1$ has been done in Proposition \ref{P-2lim}. So take $k\ge 2$ and assume the result is true for $k-1$. For cyclotomic integers $\ga_1\cc\ga_k,\ga_{k+1}$, consider
\[
m_{k+1}:=\M(\ga_1)+\cdots+\M(\ga_k)+\M(\ga_{k+1})\in(k+1)\CC.
\]
By the induction hypothesis, for fixed $\l$ the value 
\[
m_{k,\l}:=\M(\ga_1)+\cdots+\M(\ga_{k-1})+\M(\ga_{k}+\om_\l\ga_{k+1})
\]
belongs to $\CC^{(k-1)}$, and is a limit from above of elements of $\CC_{(k-1)}$. Using  Proposition \ref{P-2lim} again, we see that
$m_{k+1}$ is a limit from above of elements of $k\CC\subseteq \CC^{(k-1
)} $, namely the $m_{k,\l}$, as $\l\to\infty$, for $\l$ in some sequence $\mathcal L$. Hence $m_{k+1}\in\CC^{(k)}$. Since we can replace `above' by `below' in the two previous sentences, this proves the result for $k$.

\end{proof}
So certainly the $k$th derived set  $\CC^{(k)}$ of $\CC$ contains $(k+1)\CC$. We need to show that in fact equality holds.

\begin{proof}[Proof of Theorem \ref{T-1}] The theorem holds trivially for $k=0$. So take $k\ge 1$ and assume that it holds for $k-1$. We need to prove that $\CC^{(k)}\subseteq(k+1)\CC$.
Take $\M(\be)\in\CC^{(k)}$. Then $\M(\be)$ is a genuine limit of a convergent sequence $\{\M(\be_n)\}_{n\in\N}$ say, in $\CC^{(k-1)}$. By the induction hypothesis, $\CC^{(k-1)}\subseteq\CC_{(k)}$, so that for each $\be_n$ there are cyclotomic integers $\ga_{in}\,(i=1\cc k)$ such that
\beq\label{E-betagamma}
\M(\be_n)=\M(\ga_{1n})+\M(\ga_{2n})+\cdots+\M(\ga_{kn}).
\enq
Now the sequence $\{\M(\be_n)\}$ is bounded, so the sequences $\{\M(\ga_{in})\}\,(i=1\cc k)$ are also bounded, with the same bound, $B$ say.
Thus by replacing $\{\M(\ga_{in})\}$ by an appropriate subsequence  we can assume that each $i=1\cc k$ the sequence $\{\M(\ga_{in})\}$ converges. Because the set $\CC$ is closed, the limit will be $\M(\ga_{i\infty})$, say, for some cyclotomic integer $\M(\ga_{i\infty})$. Note too that $\M(\ga_{i\infty})$ must be a genuine limit point of $\{\M(\ga_{in})\}$ for at least one value of $i$.

Further, by Loxton's  Theorem \ref{T-Loxg}, there is an integer $N'$ such that all $\ga_{in}$ can be expressed as the sum of at most $N'$ roots of unity. Hence by replacing $\{\M(\be_n)\}$ by a suitable subsequence we may assume that for each $i$ the numbers $\ga_{in}$ can be expressed as the sum of the same number, $N_i$ say, of roots of unity. By writing each $\ga_{in}$ as a sum of a minimal number $\NC(\ga_{in})$ of roots of unity, we will have $\NC(\ga_{in})=N_i$ for each $n$.

We now study one of these sequences $\{\M(\ga_{in})\}$. For this purpose we temporarily drop the `$i$' subscript, and study the convergent sequence $\{\M(\ga_{n})\}$, where each $\ga_n$ is the sum of the same number, $N$ say, of roots of unity. By replacing $\ga_n$ by an equivalent cyclotomic integer we can assume that
\beq\label{E-sumrho}
\ga_n=1+\sum_{j=2}^N\rho_{jn},
\enq
say. By re-ordering the roots of unity, if necessary, we can also assume that the orders of these roots of unity increase nonstrictly monotonically with $j$. Consider the sequence $\{\rho_{2n}\}_{n\in\N}$. If infinitely many of these roots of unity are equal, then we can replace $\{\M(\be_n)\}$ by an infinite subsequence so that all the $\rho_{2n}$'s are equal. We do the same for $\{\rho_{3n}\}$, $\{\rho_{4n}\}\cc$ until we find a $j_1$ for which $\{\rho_{j_1n}\}$
contains only finitely many copies of every root of unity. In this situation the order of $\rho_{j_1n}$ tends to infinity with $n$. We can then rewrite \eqref{E-sumrho} as
\beq\label{E-sumrho2}
\ga_n=s_0+\rho_{j_1n}+\rho_{j_1+1,n}+\cdots
\enq
where $s_0$ is a sum of roots of unity, all independent of $n$. Note that such a term $\rho_{j_1,n}$ must exist for all $i$ where $\M(\ga_{i\infty})$ is a genuine limit point of $\{\M(\ga_{in})\}$.

We now temporarily modify \eqref{E-sumrho2} to
\beq\label{E-sumrho3}
\ga_n=s_0+\rho_{j_1n}(1+\rho'_{j_1+1,n}+\rho'_{j_1+2,n}+\cdots).
\enq
say.  We then reorder the sequence $\rho'_{j_1+1,n},\rho'_{j_1+2,n},\dots,\rho'_{N,n}$ so that their orders as roots of unity are (nonstrictly) monotonically increasing.  If the sequence $\{\rho'_{j_1+1,n}\}_{n\in\N}$ has infinitely many equal terms, then we can take an infinite subsequence of $\{\M(\be_n)\}$ where $\{\rho'_{j_1+2,n}\}$ is constant. We do the same for $\{\rho'_{j_1+2,n}\}$, if possible. We continue in this way until we encounter a sequence, $\{\rho_{j_2n}\}$ say, that contains only finitely many copies of each root of unity; we then define 
\[
s_1:=1+\sum_{j=j_1+1}^{j_2-1} \rho_{jn}'
\] so that we can rewrite \eqref{E-sumrho3} as
\beq\label{E-sumrho4}
\ga_n=s_0+\rho_{j_1n}s_1+\rho_{j_2n}(1+\rho'_{j_2+1,n}+\rho'_{j_2+2,n}+\cdots).
\enq
Note that the order of $\rho_{j_2n}/\rho_{j_1n}$ tends to infinity with $n$.

Continuing in this way, we can finally write $\ga_n$ as
\beq\label{E-sumrho5}
\ga_n=s_0+\rho_{j_1n}s_1+\rho_{j_2n}s_2+\cdots+\rho_{j_rn}s_r,
\enq
where $r\ge 1$ for at least one value of $i$, and $s_0,s_0\cc s_r$ are sums of roots of unity, all independent of $n$. In general they will, of course, depend on the (dropped) subscript $i$. Also, all of the $s_k$'s must be nonzero, as $\ga_n$ has been written as the sum of a minimal number of roots of unity. Furthermore,  for $k=1\cc r$ and $\l=k+1\cc r$ the order of $\rho_{j_{\l}n}/\rho_{j_kn}$ tends to infinity with $n$. For if the sequence $\{\text{order of }\rho_{j_{\l}n}/\rho_{j_kn}\}$ were bounded, then we could assume, by the above subsequence argument, that it would be constant. Then the term $\rho_{j_{\l}n}=\rho_{j_kn}(\rho_{j_{\l}n}/\rho_{j_kn})$ would already have contributed a root of unity to $s_k$.

From \eqref{E-sumrho5} and Proposition \ref{P-bygum} we see that
$\M(\ga_n)\to \M(s_0)+\M(s_1)+\cdots+\M(s_r)$ as $n\to\infty$.  On reinstating the dropped subscript $i$, and applying this result to each sequence $\{\M(\ga_{in})\}$,
 we see that for each $i$ the limit of this sequence is a sum of $r_i:=1+r$ elements of $\CC$. We have seen above that  $r_i\ge 2$ for at least one value of $i$, so that from \eqref{E-betagamma} that $\M(\be)=\lim_{n\to\infty}\M(\be_n)$ is a sum of  $k+t$ elements of $\CC$, where $t\ge 1$. Since by additivity
  we can express a sum of $t$ elements of $\CC$ as a single element of $\CC$, we have $\M(\be)\in\CC_{k+1}$, as required.
\end{proof}

This completes the proof of Theorem \ref{T-1}. We now know that $\CC$ is a countable closed set, having nonempty derived sets of all orders $k$, with every element of $\CC^{(k)}$ being a two-sided limit of elements of $\CC^{(k-1)}$,  and with the smallest element of the $k$th derived set tending to infinity as $k$ goes to infinity. Thus $\CC$ is a Thue set, proving Corollary \ref{C-2}.

\subsection{Structure and labelling of Thue sets.}

Two totally ordered sets are said to have the {\it same order type} if there is an order-preserving bijection between them. The {\it order type} of a set is then the ordinal having the same order type as the set.  For the ordinal $\om$, put $a_1=\om+1+\om^*$, and $a_{n+1}=a_n\om+1+(a_n\om)^*$ for $n\ge 1$. Here $()^*$ denotes the reverse order. Boyd and Mauldin \cite{BoydMauldin1996} showed that the order type of the set of PV numbers is $\sum_{n=1}^\infty a_n$. 

 Let $T$ be any Thue set. We will now build a finite string of integers to label a given element $t$ of $T$. We proceed as follows. If $t<t_1$ then $t$ is an element of the increasing sequence of all members of $T$ that are less than $t_1$, which we label $\l_{00},\l_{01},\l_{02},\dots$. For $t\ge t_1$ choose the largest $k$ such that $t \ge t_k$. Take $k$ as the first element of our string. Then there are no limit points of $T^{(k)}$ (i.e., elements of $T^{(k+1)}$) that are less than $t$, so that  $T^{(k)}$ is discrete in the interval $[t_k,t_{k+1})$, which must contain $t$. We label the elements of $[t_k,t_{k+1})\cap T^{(k)}$ in ascending order by $\l_{k0},\l_{k1},\l_{k2},\dots$. Then $t$ is in one of the half-open intervals $[\l_{kr},\l_{k,r+1})$ say; we take $r$ to be the second element of our string.
 If $t=\l_{kr}$, end the string. Otherwise, we note that 
  the elements of $T^{(k-1)}$ in the interval $(\l_{kr},\l_{k,r+1})$ form a countable set with limit points precisely at both endpoints of the interval. For definiteness we label those in $[\tfrac12(\l_{kr}+\l_{k,r+1}),\l_{k,r+1})$ by $\l_{kr0},\l_{kr1},\l_{kr2},\dots$ in ascending order, and those in $[\l_{kr},\tfrac12(\l_{kr}+\l_{k,r+1}))$ by $\l_{kr,-1},\l_{kr,-2},\l_{kr,-3},\dots,$ in descending order.
 Again, $t$ is in one of the half-open intervals defined by these points,
 so we label it by the left endpoint. Again, if $t$ is equal to this endpoint, the label ends. Otherwise, we note that in the open interval there is a countable ascending string of elements of
 $T^{(k-2)}$ with limit points precisely at both endpoints of the interval. So we can proceed as before. Continuing in this way,
  the string ends by $t$ being a left endpoint of an interval (the elements with the longest strings will be those $t$ in an interval 
  whose endpoints are in $T\setminus T^{(1)}$. Then $t$ must equal the left endpoint of such an interval. Thus, in the end, every element of $T$ is of the form $\l_s$, where $s$ is a string of integers, which we call the {\it label} of $\l_s$; we have seen that $s$ is of the form  $s=k r_1\cdots r_j$,  where $k\ge 0$ and  $1\le j\le k+1$. This tells us that $t_k\le t<t_{k+1}$ and that
 $ t\in T^{(k-j+1)}\setminus T^{(k-j+2)}$.
  
  The labelling described is ordered by the most significant digits, with the added rule that if two strings are of different lengths, but agree for the whole length of the shorter one, then this shorter one comes first in the ordering. Then this ordering coincides with the ordering on the real line.
  
  Note that the allowable integer string labels are subject to the following constraints:
  \begin{itemize}
  \item The first term, $k$, is non-negative;
  \item If $k=0$ then the second term is non-negative;
  \item The string must contain between $2$ and  $k+2$ terms.
  \eni

  \begin{prop} \label{P-Thue} Any two Thue sets have the same order type.
  \end{prop}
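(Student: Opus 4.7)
The plan is to show that the labelling construction described just before the proposition gives, for any Thue set $T$, an order-preserving bijection between $T$ and a fixed set $\mathcal{S}$ of admissible integer strings, where $\mathcal{S}$ depends only on the abstract properties defining a Thue set, not on $T$ itself. Granted this, composing such a bijection for one Thue set $T_1$ with the inverse of the corresponding bijection for $T_2$ yields an order-preserving bijection $T_1 \to T_2$, which is exactly what is required.

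First I would verify that the labelling is well defined: every $t \in T$ receives exactly one label of the form $k r_1 r_2 \cdots r_j$ with $k \ge 0$ and $1 \le j \le k+1$ (or else a label $0\,r$ with $r \ge 0$, for $t < t_1$). For this one checks at each step of the construction that $t$ falls in exactly one of the half-open intervals indexed by the next label-digit, and that the process terminates: the digit $k$ in the first position certifies that $t \in T^{(k)} \setminus T^{(k+1)}$ (using condition (iii) of a Thue set to guarantee such $k$ exists), each subsequent digit reduces the order of the derived set by one, and after at most $k+1$ further digits one must land on a left endpoint, terminating the string. The assertion in the excerpt that this lexicographic-with-prefix-rule ordering on strings matches the real-line ordering is immediate from the fact that at each stage we partition $T$ into half-open intervals indexed in order.

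Next I would prove surjectivity onto $\mathcal{S}$, which I expect to be the main obstacle. The key point is that the Thue-set axiom (ii) — every element of $T^{(k)}$ is a two-sided limit of elements of $T^{(k-1)}$ — together with axiom (i) (closedness) forces each interval encountered during the labelling to contain the expected countably infinite, bi-accumulating, discrete-at-interior set of elements of the relevant derived set. Concretely: given a prospective label $k r_1 \cdots r_j$, I would inductively exhibit the corresponding element $\l_{k r_1 \cdots r_j}$ by descending through the intervals $[t_k, t_{k+1})$, then $[\l_{k r_1}, \l_{k, r_1+1})$, and so on, at each stage using (ii) to list the elements of the appropriate derived set in the prescribed bi-infinite order and then selecting the one indexed by the next digit. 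One also needs to check that the index sets are correctly modelled by $\Z$ (or $\Z_{\ge 0}$ at the outermost level for $k=0$, and $\Z_{>0}$ or $\Z_{<0}$ according to the halving convention); the constraints listed at the end of the excerpt precisely record these combinatorial restrictions on admissible digits.

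Finally, since $\mathcal{S}$ is defined purely in terms of these combinatorial constraints, it is the same for every Thue set. If $T_1$ and $T_2$ are Thue sets with labelling bijections $\phi_1 \colon T_1 \to \mathcal{S}$ and $\phi_2 \colon T_2 \to \mathcal{S}$, both order-preserving by the remark above, then $\phi_2^{-1} \circ \phi_1 \colon T_1 \to T_2$ is an order-preserving bijection, establishing Proposition~\ref{P-Thue}. As a bonus, the same argument identifies the order type of any Thue set with the order type of $\mathcal{S}$, matching the $\sum_{n=1}^\infty a_n$ formula of Boyd–Mauldin once one observes that their labelling of PV numbers satisfies the same combinatorial constraints.
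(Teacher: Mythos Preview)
Your proposal is correct and is exactly the paper's approach. The paper states Proposition~\ref{P-Thue} immediately after the labelling construction with no explicit proof, the intended argument being precisely what you outline: the labelling gives, for any Thue set, an order-preserving bijection onto the same fixed set $\mathcal{S}$ of admissible integer strings (determined by the listed constraints, which depend only on the Thue-set axioms), and composing two such bijections yields the desired order isomorphism.
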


\section{Proof of Theorem \ref{T-quads}.}

\begin{proof}
First, note that for any integer $m\ge 0$
$$
a^2+ab+b^2+c^2+a+b+c=m
$$
has an integer solution if and only if
$$
3(2a+b+1)^2+(3b+1)^2+3(2c+1)^2=12m+7
$$
has an integer solution.  Note that the class number of $x^2+3y^2+3z^2$ is one by \cite{Jones1935}, and by using  \cite[\S 102.5]{O'Meara1963} and \cite{O'Meara1958},  one may easily check that there are integers $x,y$, and $z$ such that 
$$
x^2+3y^2+3z^2=12m+7.
$$  
Since $x$ is not divisible by $3$, by changing,  if necessary, the sign of $x$, there is an integer $b$ such that $3b+1=x$.  Assume that $x$ is even. Then $b$ is odd.
 In this case, since $y-z$ is odd, without loss of generality, we may assume that $y$ is odd. Therefore there are integers $a$ and $c$ such that 
$$
2a+b+1=z, \quad 2c+1=y.
$$
Now, assume that $x$ is odd.  Then $b$ is even and both $y$ and $z$ are odd. Therefore there are integers $a$ and $c$ satisfying the above.  
Thus \eqref{E-qp1} is universal, as claimed.

Next, note that any integer $m\ge0$, the equation 
$$
a^2+b^2+c^2+ab+bc+ca+a+b+c=m
$$
has an integer solution if and only if
$$
6(2a+b+c+1)^2+2(3b+c+1)^2+(4c+1)^2=24m+9
$$
has an integer solution.  Note that the class number of $x^2+2y^2+6z^2$ is one by \cite{Jones1935}, and by again using   \cite[\S 102.5]{O'Meara1963},  one may easily check that there are integers $x,y$, and $z$ such that 
$$
x^2+2y^2+6z^2=24m+9.
$$  
 Since $x$ is  odd, by changing the sign of $x$, if necessary, there is an integer $c$ such that $4c+1=x$.  Note that $x$ is divisible by $3$ if and only if $y$ is divisible by $3$. Hence there is an integer $b$ such that $3b+c+1=y$ by changing, if necessary, the sign of $y$. Finally, since $y\equiv z\pmod{2}$, there is an integer $a$ such that $2a+b+c+1=z$.  Thus \eqref{E-qp2} is universal.
 
\end{proof}

\section{Proof of Theorem \ref{T-2}.}

\begin{proof}
For $p$ an odd prime, let $\be = \sum_{i=0}^{p-1} a_i \om_p^i\in\Z[\om_p]$.
 These coefficients $a_i$ are not uniquely determined by $\beta$: we can replace each $a_i$ by $a_i+t$ for any $t\in \Z$. 
 Thus we can assume that $s:=\sum_{j=0}^{p-1}a_j\in[-p',p']$, where $p':=(p-1)/2$. In fact, since $\M(-\be)=\M(\be)$, we can assume for the study of $\CC_p$ that 
$s$ is an integer in $[0,p']$. Also write $\mathrm{var}(a_0\cc a_{p-1})$ for the variance of $a_0\cc a_{p-1}$.  We need the following.
\begin{lemma}\label{L:variance}
We have
\begin{align}
    p'\M(\be) &= \frac12\left(p\sum_{j=0}^{p-1}a_j^2-s^2\right)   \label{E-p'M} \\
               &=\frac{p^2}{2}\mathrm{var}(a_0,\dots,a_{p-1})\,.\label{E-var}
\end{align}
\end{lemma}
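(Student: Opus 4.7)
The plan is to expand $|\be|^2$ as a double sum in powers of $\om_p$ and then apply the mean trace, using Lemma \ref{L-tradd} for additivity together with the simple fact that $\tr(\om_p^k)$ takes only two values depending on whether $p\mid k$.

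First I would write
\[
|\be|^2 = \be\,\overline{\be} = \sum_{i=0}^{p-1}\sum_{j=0}^{p-1} a_i a_j\,\om_p^{i-j},
\]
and take the mean trace term by term via Lemma \ref{L-tradd}. Since $\om_p^k$ for $k\not\equiv 0\pmod p$ is a primitive $p$th root of unity, $\tr(\om_p^k)=\mu_\phi(p)=-1/(p-1)$, while $\tr(\om_p^0)=1$. Splitting the double sum into the diagonal $i=j$ and off-diagonal $i\ne j$ contributions gives
\[
\M(\be)=\tr(|\be|^2) = \sum_{i=0}^{p-1}a_i^2\cdot 1 + \sum_{\substack{i,j=0\\ i\ne j}}^{p-1} a_i a_j\cdot\frac{-1}{p-1}.
\]
The off-diagonal sum equals $-(s^2-\sum_i a_i^2)/(p-1)$, so after combining and clearing the denominator $p-1=2p'$ I obtain
\[
2p'\,\M(\be) = (p-1)\sum_{i=0}^{p-1}a_i^2 - \bigl(s^2-\textstyle\sum_i a_i^2\bigr) = p\sum_{i=0}^{p-1}a_i^2 - s^2,
\]
which is exactly \eqref{E-p'M}.

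For \eqref{E-var}, I would just note that by definition
\[
\mathrm{var}(a_0,\dots,a_{p-1}) = \frac{1}{p}\sum_{i=0}^{p-1}a_i^2 - \left(\frac{s}{p}\right)^2,
\]
so that $\tfrac{p^2}{2}\mathrm{var}(a_0,\dots,a_{p-1}) = \tfrac12\bigl(p\sum_i a_i^2 - s^2\bigr)$, matching the right hand side of \eqref{E-p'M}.

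There is no real obstacle here; the only delicate point is the split of the double sum according to whether $p\mid(i-j)$, which reduces to $i=j$ because $0\le i,j\le p-1$. One should also remark that the formula is manifestly invariant under the substitution $a_i\mapsto a_i+t$ (which leaves $\be$ unchanged): the variance expression makes this obvious, and for \eqref{E-p'M} a direct check shows that both $p\sum_i(a_i+t)^2 - (s+pt)^2$ and $p\sum_i a_i^2 - s^2$ agree, confirming that the non-uniqueness of the $a_i$'s is harmless.
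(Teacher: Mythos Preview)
Your proof is correct and follows essentially the same route as the paper's: both expand $|\be|^2$ as a double sum in powers of $\om_p$ and separate the diagonal from the off-diagonal terms using that the nontrivial $p$th roots of unity have mean trace $-1/(p-1)$ (equivalently, that $\sum_{i=0}^{p-1}\om_p^{i(j-k)}$ is $p$ or $0$). The only cosmetic difference is that the paper sums explicitly over the $p-1$ Galois conjugates, whereas you invoke $\tr$ and Lemma~\ref{L-tradd}; the variance identity is handled identically.
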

\begin{proof}[Proof of Lemma \ref{L:variance}]
We have
\begin{align*}
p'\M(\be) &=\tfrac12\sum_{i=1}^{p-1}\left(\sum_{j=0}^{p-1}a_j\om_p^{ij}\sum_{k=0}^{p-1}a_k\om_p^{-ik}\right) \\
&= \tfrac12\sum_{j=0}^{p-1}\sum_{k=0}^{p-1}a_ja_k\left(\sum_{i=1}^{p-1}\om_p^{i(j-k)}\right) \\
&= \tfrac12\sum_{j=0}^{p-1}\sum_{k=0}^{p-1}a_ja_k\left(\sum_{i=0}^{p-1}\om_p^{i(j-k)}-1\right) \\
&=\tfrac12\left(p\sum_{j=0}^{p-1}a_j^2-s^2\right),
\end{align*}
giving \eqref{E-p'M}. This also equals
\[
\tfrac{p^2}{2}\left(\tfrac1{p}\sum_{j=0}^{p-1}a_j^2-\left(\tfrac{s}{p}\right)^2\right)=\tfrac{p^2}{2}\mathrm{var}(a_0,\dots,a_{p-1}).
\]
\end{proof}
Thus we can interpret the Cassels height $\M(\be)$ as a fixed multiple (depending on $p$) of the variance of the sequence of coefficients $a_i$ of $\be$. Thus, for $p$ and $s$ given, $p'\M(\be)$ is minimised when the $a_i$ are as close as possible to each other (and to their mean, which lies in $[0,1/2)$), and the minimum occurs precisely when $s$ of the $a_i$ equal $1$, while the remaining $p-s$ are $0$. From the formula \eqref{E-p'M}, 
we see that this minimum of $p'\M(\be)$ is $\frac{s(p-s)}{2}$.
 Furthermore, up to permutation of the $a_k$'s, 
this is the only sequence for which the minimum occurs.

We must now show that for  $p'\M(\be)$ can take all integer values  $\frac{s(p-s)}{2}+rp$, for all integers $r\ge 0$. We separate three cases.

\begin{itemize}[leftmargin=*]
\item $p\ge 11$.
From the $s$ ones and $p-s$ zeroes in $a_0,a_1\cc a_{p-1}$ we can choose $\lfloor\tfrac{s}{2}\rfloor+\lfloor\tfrac{p-2}{2}\rfloor> 4$ pairs of equal values (both $1$ or both $0$). Taking four of these pairs $(a,a)$ and replacing each by $(a+n,a-n)$ for some integer $n$, we see from \eqref{E-p'M} that $p'\M(\be)$ is increased by $p$ times the sum of four squares of integers. Since every nonnegative integer $r$ is the sum of four squares \cite{Lagrange1770}, we indeed have that $p'\M(\be)$ can take every value $\frac{s(p-s)}{2}+rp$.

\item $p=7$. We have $s=0,1,2$ and $3$. Let $\ua=(0,0,0,0,0,a_5,a_6)$, so that $s=a_5+a_6$. Now change $\ua$ to  $\ua=(-a,-b,-c,-d,a+b+c+d,a_5,a_6)$. Then  $s$ remains unchanged, while $\sum_{j=0}^{6}a_j^2$ increases by $7/2$ times
\beq\label{E-quad5}
a^2+b^2+c^2+d^2+(a+b+c+d)^2=2(a^2  + b^2 + c^2 + d^2+ a(b+c+d)+b(c+d)+cd).
\enq
  This quadratic form, with root lattice $A_4$, has class number $1$ (see Nipp  \cite{Nipp1991}), and locally represents all even integers. Hence by [2, \S102.5], it represents all even positive integers.   
 By choosing $a_5,a_6=0$ or $1$, and so $s=0,1$ or $2$ we see from \eqref{E-p'M} that $p'\M(\be)$ can take all values $\tfrac12 s(7-s)+7r$ for every integer $r\ge 0$, for these values of $s$. For $s=3$  and $\ua=(0,0,0,0,1,1,1)$ we change $\ua$ to $\ua=(a,b,c,-(a+b+c),d+1,-d+1,1)$.
Here still $s=3$, while $\sum_{j=0}^{p-1}a_j^2$ increases by $7/2$ times
\[
a^2+b^2+c^2+(a+b+c)^2+(d+1)^2+(-d+1)^2=2(a^2 + (b + c)a + b^2 + bc + c^2 + d^2).
\]
 This quadratic form, with root lattice $A_3\perp A_1$, has class number $1$ (see \cite{Nipp1991}), and locally represents all even integers. Hence by [2, \S102.5], it represents all even positive integers.   
Thus $p'\M(\be)$ can take all values $\tfrac12 s(7-s)+7r$ for every integer $r\ge 0$ for $s=3$ also.

\item $p=5$. We have $s=0,1$ and $2$. The case $s=0$ is essentially the same as for $p=7$: take $\ua=(-a,-b,-c,-d,a+b+c+d)$, with again $\sum_{j=0}^{4}a_j^2$ given by \eqref{E-quad5}. For $s=1$, start with $\ua=(0,0,0,0,1)$ and change it to $(0,-a,-b,-c,1+a+b+c)$. Then $\sum_{j=0}^{4}a_j^2$ increases by $5/2$ times
\[
a^2+b^2+c^2+(1+a+b+c)^2-1=2(a^2+b^2+c^2+ab+bc+ca+a+b+c).
\]
Hence, by Theorem \ref{T-quads}, $p'\M(\be)$ can take all values $2+5r$ for every integer $r\ge 0$. For $s=2$, start with $\ua=(0,0,0,1,1)$ and change it to $(-a,-b,-c,1+a+b,1+c)$. Then $\sum_{j=0}^{4}a_j^2$ increases by $5/2$ times
\[
a^2+b^2+c^2+(1+a+b)^2+(1+c)^2-2=2(a^2+ab+b^2+c^2+a+b+c).
\]
Hence, again by Theorem \ref{T-quads}, $p'\M(\be)$ can take all values $3+5r$ for every integer $r\ge 0$.

\eni
\end{proof}

Note that it follows that, for $\be\in\Z[\om_p]$, $\M(\be)$ depends only on the set $\{a_k\}$ of coefficients of $\be$, and not on their order. (In fact it depends only on $\sum_k a_k$ and $\sum_k a_k^2$.) Thus in general there are many inequivalent $\be\in\Q(\om_p)$ with the same value of $\M(\be)$. Note too that, having established Theorem \ref{T-2}, Lemma \ref{L:variance} provides a description of the possible values of the variance of a sequence of $p$ integers.

\bibliography{MOSmain}

\begin{thebibliography}{10}

\bibitem{Boyd1979}
David~W. Boyd.
\newblock On the successive derived sets of the {P}isot numbers.
\newblock {\em Proc. Amer. Math. Soc.}, 73(2):154--156, 1979.

\bibitem{Boyd1981b}
David~W. Boyd.
\newblock Speculations concerning the range of {M}ahler's measure.
\newblock {\em Canad. Math. Bull.}, 24(4):453--469, 1981.

\bibitem{BoydMauldin1996}
David~W. Boyd and R.~Daniel Mauldin.
\newblock The order type of the set of {P}isot numbers.
\newblock {\em Topology Appl.}, 69(2):115--120, 1996.

\bibitem{CalegariMorrisonSnyder2011}
Frank Calegari, Scott Morrison, and Noah Snyder.
\newblock Cyclotomic integers, fusion categories, and subfactors.
\newblock {\em Comm. Math. Phys.}, 303(3):845--896, 2011.

\bibitem{Cassels1969}
J.~W.~S. Cassels.
\newblock On a conjecture of {R}. {M}. {R}obinson about sums of roots of unity.
\newblock {\em J. Reine Angew. Math.}, 238:112--131, 1969.

\bibitem{Jones1968}
A.~J. Jones.
\newblock Sums of three roots of unity.
\newblock {\em Proc. Cambridge Philos. Soc.}, 64:673--682, 1968.

\bibitem{Jones1935}
B.~W. Jones.
\newblock A table of {E}isenstein-reduced positive ternary quadratic forms of
  determinant $\le 200$.
\newblock {\em Bulletin of the National Research Council}, 97, 1935.

\bibitem{Lagrange1770}
P.G.L. Lagrange.
\newblock D\'emonstration d'un th\'eor\`eme d'arithm\'etique.
\newblock {\em M\'em. Acad. Roy. Sci. Berlin}, 123, 1770.
\newblock Also Oeuvres. vol. 3 1869, pp. 189--201.

\bibitem{Loxton1972}
J.~H. Loxton.
\newblock On the maximum modulus of cyclotomic integers.
\newblock {\em Acta Arith.}, 22:69--85, 1972.

\bibitem{Loxton1974}
J.~H. Loxton.
\newblock On two problems of {R}. {W}. {R}obinson about sums of roots of unity.
\newblock {\em Acta Arith.}, 26:159--174, 1974/75.

\bibitem{Nipp1991}
Gordon~L. Nipp.
\newblock {\em Quaternary quadratic forms}.
\newblock Springer-Verlag, New York, 1991.
\newblock Computer generated tables, https://doi.org/10.1007/978-1-4612-3180-6.

\bibitem{O'Meara1958}
O.~T. O'Meara.
\newblock The integral representations of quadratic forms over local fields.
\newblock {\em Amer. J. Math.}, 80:843--878, 1958.

\bibitem{O'Meara1963}
O.~T. O'Meara.
\newblock {\em Introduction to quadratic forms}.
\newblock Die Grundlehren der mathematischen Wissenschaften, Bd. 117. Academic
  Press, Inc., Publishers, New York; Springer-Verlag,
  Berlin-G\"{o}ttingen-Heidelberg, 1963.

\bibitem{RobinsonWurtz2013}
Frederick Robinson and Michael Wurtz.
\newblock On the magnitudes of some small cyclotomic integers.
\newblock {\em Acta Arith.}, 160(4):317--332, 2013.

\bibitem{Robinson1965}
Raphael~M. Robinson.
\newblock Some conjectures about cyclotomic integers.
\newblock {\em Math. Comp.}, 19:210--217, 1965.

\bibitem{Salem1944}
R.~Salem.
\newblock A remarkable class of algebraic integers. {P}roof of a conjecture of
  {V}ijayaraghavan.
\newblock {\em Duke Math. J.}, 11:103--108, 1944.

\bibitem{Schinzel1966}
A.~Schinzel.
\newblock On sums of roots of unity. {S}olution of two problems of {R}. {M}.
  {R}obinson.
\newblock {\em Acta Arith.}, 11:419--432, 1966.

\bibitem{Smyth2018}
Chris Smyth.
\newblock Closed sets of {M}ahler measures.
\newblock {\em Proc. Amer. Math. Soc.}, 146(6):2359--2372, 2018.

\bibitem{StanZaharescu2009}
Florin Stan and Alexandru Zaharescu.
\newblock Siegel's trace problem and character values of finite groups.
\newblock {\em J. Reine Angew. Math.}, 637:217--234, 2009.

\bibitem{Thue1912}
A.~{Thue}.
\newblock {\"Uber eine Eigenschaft, die keine transzendente Gr\"o{\ss}e haben
  kann.}
\newblock {\em {Christiania Vidensk. selsk. Skrifter.}}, 2(20), 1912.
\newblock 15 pages.

\end{thebibliography}

\end{document}